\pgfplotsset{compat=1.18}
\theoremstyle{plain} 
\newtheorem{theorem}{Theorem}
\newtheorem{lemma}[theorem]{Lemma} 
\newtheorem{corollary}[theorem]{Corollary}
\theoremstyle{definition}
\newtheorem*{cec}{Critical exponent conjecture}
\newtheorem*{weakcec}{Weak critical exponent conjecture}
\newtheorem*{conjecture}{Conjecture}
\begin{document} 
\title{Critical exponents of the Riesz projection}
\date{\today} 

\author{Ole Fredrik Brevig} 
\address{Department of Mathematics, University of Oslo, 0851 Oslo, Norway} 
\email{obrevig@math.uio.no}

\author{Adri\'{a}n Llinares}
\address{Department of Mathematics and Mathematical Statistics, Ume{\aa} University, SE-90187 Ume{\aa}, Sweden}
\curraddr{Departamento de An\'alisis Matem\'atico y Matem\'atica Aplicada, Universidad Complutense de Madrid, 28040 Madrid, Spain}

\email{adrialli@ucm.es}

\author{Kristian Seip} 
\address{Department of Mathematical Sciences, Norwegian University of Science and Technology (NTNU), 7491 Trondheim, Norway} 
\email{kristian.seip@ntnu.no}

\thanks{Research supported in part by Grants 275113 and 334466 of the Research Council of Norway. The work of Llinares was in particular funded by the former grant through the Alain Bensoussan Fellowship Programme of the European Research Consortium for Informatics and Mathematics. His research was also funded by the postdoctoral scholarship JCK22-0052 from the Kempe Foundations and partially supported by grant PID2019-106870GB-I00 from Ministerio de Ciencia e Innovaci\'{o}n. Seip's work was also supported in part by the Swedish Research Council under grant no. 2021-06594 while he was in residence at Institut Mittag-Leffler in Djursholm, Sweden during the spring semester 2024.}

\begin{abstract}
	Let $\mathfrak{p}_d(q)$ denote the critical exponent of the Riesz projection from $L^q(\mathbb{T}^d)$ to the Hardy space $H^p(\mathbb{T}^d)$, where $\mathbb{T}$ is the unit circle. We present the state-of-the-art on the conjecture that $\mathfrak{p}_1(q) = 4(1-1/q)$ for $1 \leq q \leq \infty$ and prove that it holds in the endpoint case $q = 1$. We then extend the conjecture to
	\[\mathfrak{p}_d(q) = 2+\cfrac{2}{d+\cfrac{2}{q-2}}\]
	for $d\geq1$ and $\frac{2d}{d+1} \leq q \leq \infty$ and establish that if the conjecture holds for $d=1$, then it also holds for $d=2$. When $d=2$, we verify that the conjecture holds in the endpoint case  $q = 4/3$. 
\end{abstract}

\subjclass[2020]{Primary 42B05. Secondary 42B30, 46E30.}

\maketitle
\section{Introduction}
This paper deals with what we will call the \emph{critical exponent conjecture} for the Riesz projection. This problem was first formulated in \cite{BOSZ18} along with three other conjectures about contractive inequalities. The latter problems have been settled affirmatively in recent papers by Kulikov \cite{Kulikov22} and the second named author  \cite{Llinares22}, and so the critical exponent conjecture is the sole survivor from  \cite{BOSZ18}. 

We begin by considering the problem in the classical setting of the unit circle $\mathbb T$. We say that $\varphi$ belongs to the Hardy space $H^p$ for $0<p<\infty$ if $\varphi$ is analytic in the unit disc $\mathbb{D}$ and 
\[ \| \varphi\|_{H^p}\coloneqq\sup_{0<r<1} \left(\int_{0}^{2\pi} |\varphi(re^{i\theta})|^p \frac{d\theta}{2\pi}\right)^{\frac{1}{p}}<\infty. \] 
We think of $H^p$ as a subspace of $L^p\coloneqq L^p(\mathbb T)$. Indeed, a function $\varphi$ in $H^p$ has a nontangential limit at almost every point in $\mathbb T$, also denoted by $\varphi$, and we have $\| \varphi \|_{H^p}=\| \varphi \|_{p}$, where $\| \varphi \|_p \coloneqq \| \varphi \|_{L^p}$. If $\varphi$ is a nontrivial function in $H^p$ for some $p > 0$, then we set
\[\|\varphi\|_0 \coloneq \exp\left(\int_0^{2\pi} \log|\varphi(e^{i\theta})|\,\frac{d\theta}{2\pi}\right),\]
so that $\|\varphi\|_p \to \|\varphi\|_0$ as $p \to 0^+$. The \emph{Riesz projection} $P_+$ is the orthogonal projection from $L^2$ to $H^2$. Since functions in $L^q$ for $q\ge 1$ can be represented by Fourier series, we may write
\[ P_+ \varphi (e^{i\theta})=\sum_{n=0}^{\infty} \widehat{\varphi}(n) e^{i n\theta} \]
and thus consider $P_+$ as an operator acting on $L^q$ for $q\ge 1$.
Its \emph{critical exponent} is defined for $1 \leq q \leq \infty$ as 
\[	\mathfrak{p}(q) \coloneq \sup \left\{ p\geq 0: \ \| P_+\psi \|_p \leq \| \psi  \|_q \, \text{ for all }\,  \psi \, \text{ in }\,  L^q\right\} \]
if the set on the right-hand side is nonempty, and otherwise we set by convention $\mathfrak{p}(q)=-1$. 
It is plain that $\mathfrak{p}(2)=2$ by orthogonality and that $q \mapsto \mathfrak{p}(q)$ is increasing by H\"older's inequality. Marzo and Seip \cite{MZ11} have established that $\mathfrak{p}(\infty)=4$, which by duality also yields $\mathfrak{p}(4/3)=1$. Our starting point reads as follows.

\begin{figure}
	\centering
	\begin{tikzpicture}
		\begin{axis}[
			axis equal image,
			xlabel= $q$, 
			ylabel = $\mathfrak{p}(q)$,
			axis lines = center,
			xmin = -0.01,
			xmax = 4,
			ymin = -0.01,
			ymax = 4,
			every axis x label/.style={
				at={(ticklabel* cs:1.025)},
				anchor=west,
			},
			every axis y label/.style={
				at={(ticklabel* cs:1.025)},
				anchor=south,
			},
			xtick={0.01,1,2,4},
			xticklabels={1,\sfrac{4}{3},2,$\infty$},
			ytick={1,2,4},
			yticklabels={1,2,4},
			axis line style={-}]
			
			% We map [0,2] to [1,\infty] using q(x) = 4/(4-x). Crucial: q(2)=2.
			
			% LOWER BOUNDS
			\addplot [domain=0:1, samples=500, color=red, thick] {0};
			\addplot [domain=1:2, samples=500, color=red, thick] {2/(3-x)};
			\addplot [domain=2:4, samples=500, color=red, thick] {8/(6-x)};
			
			% UPPER BOUNDS
			\addplot [domain=0:4, samples=500, color=blue, thick] {x};
						
		\end{axis} 
	\end{tikzpicture}
	\caption{The current {\color{blue} upper} and {\color{red} lower} bounds for $\mathfrak{p}(q)$. The upper bound is from \cite{BOSZ18} or Corollary~\ref{cor:upperbound} below. The first part of the lower bound is from Theorem~\ref{thm:L1}, while the other two are via \cite{MZ11} and Riesz--Thorin interpolation (see \eqref{eq:btwn2inf} and \eqref{eq:btwn12} below). The critical exponent conjecture asserts that the upper bound is correct.}
	\label{fig:p1q}
\end{figure}
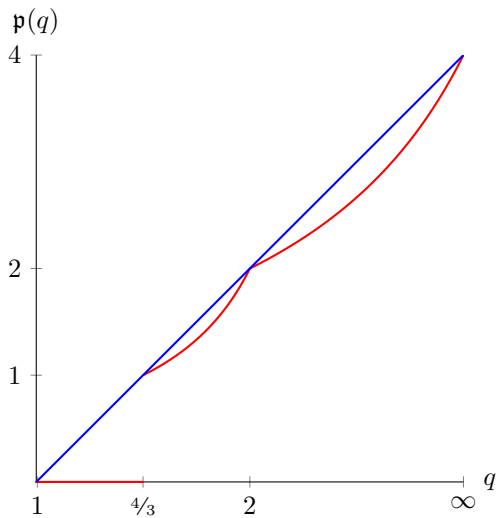

\begin{cec}
	$\mathfrak{p}(q) = 4(1-1/q)$ for $1 \leq q \leq \infty$.	
\end{cec}

Note that the endpoint case $q=1$ of the conjecture would follow from the case $1<q<2$ in the limit. It is known that $\mathfrak{p}(q) \leq 4(1-1/q)$, so the task at hand is to establish the converse inequality.

The purpose of the present note is twofold: Our first goal is to present the state-of-the-art on the critical exponent conjecture and explain how the conjecture fits within the framework of dual extremal problems on Hardy spaces. Our second goal is to extend the conjecture to the higher-dimensional setting.

We will begin by resolving the critical exponent conjecture in the endpoint case $q=1$. Recall that every nontrivial function $\varphi$ in $H^p$ enjoys the canonical factorization $\varphi = \Phi I$, where $\Phi$ is an outer function and $I$ is an inner function.

\begin{theorem} \label{thm:L1}
	If $\psi$ is in $L^1(\mathbb{T})$, then
	\begin{equation} \label{eq:explog} \exp\left(\int_0^{2\pi} \log|P_+ \psi(e^{i\theta})|\,\frac{d\theta}{2\pi}\right) \leq \|\psi\|_1.\end{equation}
	Equality occurs for a nontrivial function $\psi$ in $L^1$ if and only if $\overline{I} \psi \geq0$ almost everywhere, where $I$ denotes the inner factor of $P_+ \psi$.
\end{theorem}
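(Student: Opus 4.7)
My plan is to reduce the inequality to a bound on the value of the outer factor at the origin. Writing $\varphi \coloneqq P_+\psi$ and assuming $\varphi \not\equiv 0$, the canonical factorization $\varphi = \Phi I$ has $|\Phi| = |\varphi|$ a.e.\ on $\mathbb{T}$, and $\log|\Phi|$ is harmonic in $\mathbb{D}$ since $\Phi$ is outer. The mean-value property therefore gives
\[
|\Phi(0)| = \exp\!\left(\int_0^{2\pi}\log|\varphi(e^{i\theta})|\,\frac{d\theta}{2\pi}\right) = \|\varphi\|_0,
\]
so it suffices to prove $|\Phi(0)| \leq \|\psi\|_1$ and to identify the equality case.

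The crux is the identity
\[
\Phi(0) = \int_0^{2\pi}\overline{I(e^{i\theta})}\,\psi(e^{i\theta})\,\frac{d\theta}{2\pi}.
\]
To establish it, set $g \coloneqq \overline{I}\psi$, which lies in $L^1(\mathbb{T})$ because $I \in H^\infty$. Using $\overline{I}I = 1$ a.e.\ on $\mathbb{T}$ we get $\overline{I}\varphi = \Phi$ a.e., hence $g = \Phi + \overline{I}(\psi - \varphi)$ on $\mathbb{T}$. The function $\psi - \varphi$ has only strictly negative Fourier frequencies, i.e.\ it equals $\overline{h}$ for some $h \in H^p$ (any $p < 1$) with $h(0)=0$; consequently $\overline{I}(\psi - \varphi) = \overline{Ih}$ with $Ih \in H^p$ and $Ih(0) = 0$, making $\overline{Ih}$ purely anti-analytic and free of a constant Fourier coefficient. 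Applying the Riesz projection to this decomposition of $g$ therefore yields $P_+ g = \Phi$, and evaluating at $z = 0$ gives
\[
\Phi(0) = (P_+g)(0) = \widehat{g}(0) = \int_0^{2\pi}\overline{I(e^{i\theta})}\,\psi(e^{i\theta})\,\frac{d\theta}{2\pi}.
\]

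The inequality $|\Phi(0)| \leq \|\psi\|_1$ is then immediate from the triangle inequality together with $|I| = 1$ a.e. For the equality case, triangle equality forces $\overline{I}\psi$ to have constant phase a.e.; since the inner factor $I$ in the factorization $\varphi = \Phi I$ is determined only up to a unimodular constant, absorbing that freedom into $I$ normalizes the phase to $1$, giving $\overline{I}\psi \geq 0$ a.e. The main technical subtlety is that $\varphi$, $\Phi$, and $\overline{Ih}$ typically fail to lie in $L^1$ (because $P_+$ is unbounded on $L^1$), so the manipulations above must be read in the $H^p$ scale for $p < 1$, which is available via Kolmogorov's theorem; this is where I expect the argument to require the most care.
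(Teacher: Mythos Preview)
Your proof is correct and follows essentially the same route as the paper: factor $\varphi = P_+\psi = I\Phi$, identify $\|\varphi\|_0 = |\Phi(0)|$, observe that $\Phi = P_+(\overline{I}\psi)$ so that $\Phi(0)$ is the zeroth Fourier coefficient of $\overline{I}\psi$, and finish with the triangle inequality. The paper also invokes Kolmogorov's theorem (stated just before the proof) to justify working in $H^p$ for $p<1$, exactly as you flag in your final paragraph. One small remark: under the standard normalization $\Phi(0)>0$ the inner factor is already fixed, so in the equality case $\widehat{g}(0)=\Phi(0)>0$ forces $\overline{I}\psi\ge 0$ directly, without any need to reabsorb a unimodular constant.
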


In combination with \cite[Theorem~9]{BOSZ18} or Theorem~\ref{thm:RPK} below, we get the following.

\begin{corollary} \label{cor:L1}
	$\mathfrak{p}(1)=0$.
\end{corollary}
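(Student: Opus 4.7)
The plan is to obtain the corollary by combining the two matching one-sided bounds on $\mathfrak{p}(1)$ that are now in hand. The lower bound $\mathfrak{p}(1) \geq 0$ will come from Theorem~\ref{thm:L1}, which encapsulates the substantive work of this section; the upper bound $\mathfrak{p}(1) \leq 0$ is the cited general inequality $\mathfrak{p}(q) \leq 4(1 - 1/q)$ from \cite[Theorem~9]{BOSZ18} or Theorem~\ref{thm:RPK} below, specialized to $q = 1$.

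In detail, Theorem~\ref{thm:L1} gives $\|P_+\psi\|_0 \leq \|\psi\|_1$ for every $\psi$ in $L^1(\mathbb{T})$. By the convention $\|\varphi\|_0 \coloneq \exp\bigl(\int_0^{2\pi}\log|\varphi(e^{i\theta})|\,d\theta/(2\pi)\bigr)$ recalled in the introduction, this inequality exhibits $p = 0$ as an admissible element of the set whose supremum defines $\mathfrak{p}(1)$; hence $\mathfrak{p}(1) \geq 0$. Specializing the quoted upper bound to $q = 1$ yields $\mathfrak{p}(1) \leq 4(1-1) = 0$. Together, the two inequalities pin down $\mathfrak{p}(1) = 0$.

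I do not foresee any further obstacle: the entire substance of the corollary has been packed into Theorem~\ref{thm:L1}, and what remains is merely a bookkeeping step around the definition of $\mathfrak{p}$. It is worth flagging that the alternative route to $\mathfrak{p}(1) \geq 0$, namely passing to the limit in the (still conjectural) lower bound at $1 < q < 2$, is not yet available, which is precisely why the direct endpoint argument in Theorem~\ref{thm:L1} is needed for an unconditional statement.
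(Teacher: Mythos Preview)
Your proposal is correct and matches the paper's own argument exactly: the paper derives the corollary by combining Theorem~\ref{thm:L1} (giving $\mathfrak{p}(1)\geq 0$) with the upper bound $\mathfrak{p}(q)\leq 4(1-1/q)$ from \cite[Theorem~9]{BOSZ18} or Theorem~\ref{thm:RPK}, specialized to $q=1$.
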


See Figure~\ref{fig:p1q} for a visualization of the current status on the critical exponent conjecture. 
 
There are of course many functions in $L^p$ whose Riesz projections coincide. When considering the critical exponent conjecture, it is natural to restrict to the function of minimal $L^p$ norm yielding a given Riesz projection. This means that we may invoke the theory of dual extremal problems in Hardy spaces, which in its final form is independently due to Havinson~\cite{Havinson1951} and to Rogosinski and Shapiro \cite{RS1953}.

In the case $p=1$, a function $\psi$ has minimal $L^1$ norm among all the functions with Riesz projection $P_+ \psi$ if and only if there is an inner function $I$ such that $\overline{I} \psi \geq 0$ almost everywhere (see e.g. \cite[Theorem~2.1]{AF2018}). We see that equality in \eqref{eq:explog} is attained if and only if this inner function happens to be the inner factor of $P_+ \psi$. A trivial example will be $\psi = CI$ for a constant $C\neq0$ and an inner function $I$. We will explain below how to construct more interesting examples of functions $\psi$ for which equality holds in \eqref{eq:explog}.

The main motivation for \cite{MZ11} and, to a somewhat lesser extent, behind the critical exponent conjecture \cite{BOSZ18} was to understand the Riesz projection in higher dimensions. Let $\mathbb{T}^d$ be the $d$-fold Cartesian product of the circle $\mathbb{T}$. Every $\psi$ in $L^q(\mathbb{T}^d)$ for $q\ge 1$ can be expanded as a Fourier series
\[\psi(z) \sim  \sum_{\alpha \in \mathbb{Z}^d} \widehat{\psi}(\alpha) \,z^\alpha. \]
We define $H^p(\mathbb{T}^d)$ for $p>0$ similarly as above (see \cite[Chapter~3]{Rudin}). For $p\ge 1$, $H^p(\mathbb{T}^d)$ is comprised of those functions in $L^p(\mathbb{T}^d)$ that satisfy \[\widehat{\psi}(\alpha)=0\]
for every $\alpha$ in $\mathbb{Z}^d \setminus \mathbb{N}_0^d$, where $\mathbb{N}_0=\{0,1,2,\ldots\}$. We define $\|\varphi\|_0$ as in the one-dimensional case and have again that $\|\varphi\|_p \to \|\varphi\|_0$ as $p \to 0^+$. The Riesz projection $P_+$ is the orthogonal projection from $L^2(\mathbb{T}^d)$ to $H^2(\mathbb{T}^d)$ and its \emph{critical exponent} is defined as before for $1\le q \le \infty$:
\[ \mathfrak{p}_d(q) \coloneqq \sup\left \{ p\geq 0 \,: \, \| P_+\psi \|_p \leq \| \psi  \|_q \, \text{ for every }\, \psi \, \text{ in } \, L^q(\mathbb T^d)\right\}, \]
with the same convention that $\mathfrak{p}(q)=-1$ should the set on the right-hand side be empty. The \emph{minimal admissible exponent} in dimension $d$, defined as
\begin{equation} \label{eq:qd}
	\mathfrak{q}(d) \coloneqq \inf \{q: \mathfrak{p}_d(q) \ge  0   \},
\end{equation}
is a quantity of particular interest in this context. Since $\| P_+ f\|_0\le \|f\|_q$ for every $q>\mathfrak{q}(d)$, it is clear that $\mathfrak{p}(\mathfrak{q}(d))=0$, so the infimum \eqref{eq:qd} is in fact attained.

A simple argument involving H\"older's inequality (see Lemma~\ref{lem:holder} below) shows that
\begin{equation} \label{eq:holder}
	\mathfrak{p}_{d+1}(q) \geq \mathfrak{p}_d(\mathfrak{p}_1(q)),
\end{equation}
so one way to extend the critical exponent conjecture to higher dimensions is to find the function of two variables $d$ and $q$ that agrees with the conjecture in the case $d=1$ and that attains equality in \eqref{eq:holder}. We see that
\[\mathfrak{a}_d(q) \coloneq 2+\cfrac{2}{d+\cfrac{2}{q-2}},\]
satisfies $\mathfrak{a}_1(q)=4(1-1/q)$ and the functional equation $\mathfrak{a}_{d_1+d_2}(q) = \mathfrak{a}_{d_1}(\mathfrak{a}_{d_2}(q))$, and it is therefore the desired function. We arrive at the following generalization.

\begin{figure}
	\centering
	\begin{tikzpicture}
		\begin{axis}[
			axis equal image,
			xlabel= $q$, 
			ylabel = $\mathfrak{p}_2(q)$,
			axis lines = center,
			xmin = -0.01,
			xmax = 4,
			ymin = -1.01,
			ymax = 3,
			every axis x label/.style={
				at={(ticklabel* cs:1.025)},
				anchor=west,
			},
			every axis y label/.style={
				at={(ticklabel* cs:1.025)},
				anchor=south,
			},
			xtick={1,3/2,2,4},
			xticklabels={\sfrac{4}{3},\sfrac{8}{5},2,$\infty$},
			ytick={-1,1,2,8/3,3},
			yticklabels={-1,1,2,\sfrac{8}{3},3},
			axis line style={-}]
			
			% We map [0,2] to [1,\infty] using q(x) = 4/(4-x). Crucial: q(2)=2.
			
			% LOWER BOUNDS
			\addplot [domain=1:3/2, samples=500, color=red, thick] {0};
			\addplot [domain=3/2:2, samples=500, color=red, thick] {2/(5-2*x)};
			\addplot [domain=2:4, samples=500, color=red, thick] {16/(10-x)};
			
			% UPPER BOUNDS
			\addplot [domain=0:1, samples = 500, color=blue, thick] {-1};
			\addplot [domain=1:4, samples=500, color=blue, thick] {4-4/x};
						
		\end{axis} 
	\end{tikzpicture}
	\caption{The current {\color{blue} upper} and {\color{red} lower} bounds for $\mathfrak{p}_2(q)$. The upper bounds are from Theorem~\ref{thm:d2}. The first part of the lower bound is from Corollary~\ref{cor:p243}, while the other two are obtained from the lower bounds in $d=1$ and Lemma~\ref{lem:holder}. The critical exponent conjecture asserts that the upper bound is correct.}
	\label{fig:p2q}
\end{figure}
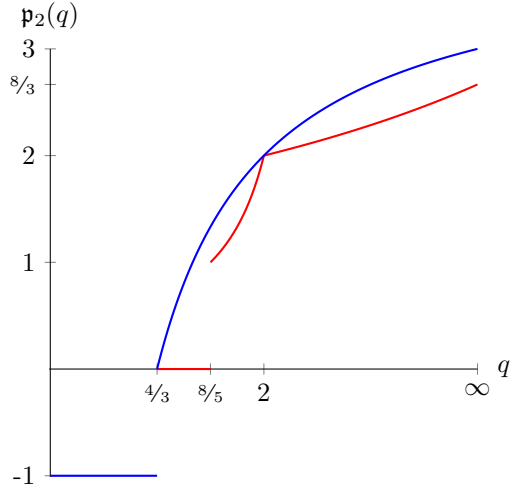

\begin{cec}[$d>1$] 
	\[ \mathfrak{p}_d(q) = \begin{cases} \mathfrak{a}_d(q), & \frac{2d}{d+1} \le q \le \infty; \\
	  -1, & 1\le q < \frac{2d}{d+1} . \end{cases} \]
\end{cec}
If we assume that the critical exponent conjecture holds for $d=1$ and every $1 \leq q \leq \infty$, then it follows from \eqref{eq:holder} and induction that
\begin{equation} \label{eq:upandaway}
	\mathfrak{p}_d(q) \geq \mathfrak{a}_d(q)
\end{equation}
for every $d\geq1$ and every $\frac{2d}{d+1} \leq q \leq \infty$. If we could find examples in higher dimensions allowing us to reverse the inequality in \eqref{eq:upandaway}, then the critical exponent conjecture in dimension $d$ would follow from that in the case $d=1$.  We have found such examples when $d=2$ and thus obtained the following result.

\begin{theorem} \label{thm:d2}
If $\frac{4}{3} \leq q \leq \infty$, then $\mathfrak{p}_2(q) \le \mathfrak{a}_2 (q)$. If $0 < q < \frac{4}{3}$, then $\mathfrak{p}_2 (q) = -1$.
\end{theorem}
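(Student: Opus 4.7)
The plan is to prove Theorem~\ref{thm:d2} by exhibiting explicit families of test functions $\psi_\varepsilon\in L^q(\mathbb{T}^2)$ that witness the two claimed upper bounds. The structural guide is the iterated identity $P_+^{(2)}=P_+^{(z_1)}\circ P_+^{(z_2)}$ together with the composition law $\mathfrak{a}_2(q)=\mathfrak{a}_1(\mathfrak{a}_1(q))$: with $r:=\mathfrak{a}_1(q)$ and $p:=\mathfrak{a}_1(r)=\mathfrak{a}_2(q)$, a 2D near-extremizer should near-saturate the 1D $L^q\to L^r$ problem fiberwise in $z_2$ and, after partial projection in $z_2$, near-saturate the 1D $L^r\to L^p$ problem in $z_1$.

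For the upper bound $\mathfrak{p}_2(q)\le\mathfrak{a}_2(q)$ when $q\ge 4/3$, I would seek a one- or two-parameter family $\psi_\varepsilon$ in which the variables $z_1$ and $z_2$ are coupled non-trivially, so that the Fourier expansion of $P_+^{(2)}\psi_\varepsilon$ carries a coordinated singularity in both variables and the norm ratio diverges at the 2D critical rate. A key observation: a naive tensor product $\psi=f(z_1)g(z_2)$ is insufficient, since its ratio $\|P_+\psi\|_p/\|\psi\|_q$ factorizes into two 1D ratios and at best recovers $\mathfrak{p}_2(q)\le\mathfrak{a}_1(q)$, which is strictly weaker than $\mathfrak{a}_2(q)$ for $q\ne 2$. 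A genuine coupling --- for instance a factor in which $z_1$ and $z_2$ appear together and which yields nontrivial interaction under $P_+^{(z_2)}$ --- is therefore essential. The norm computations would then proceed by double Fourier expansion, reducing both $\|\psi_\varepsilon\|_q$ and $\|P_+\psi_\varepsilon\|_p$ to explicit sums or Beta integrals whose asymptotics are controlled by ratios of Gamma functions.

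For $0<q<4/3$ one has $\mathfrak{a}_2(q)<0$, and the same construction extended to this range should give $\mathfrak{p}_2(q)\le\mathfrak{a}_2(q)<0$, forcing $\mathfrak{p}_2(q)=-1$ by the definition of the critical exponent. Alternatively, since Part~1 together with Lemma~\ref{lem:holder} and Theorem~\ref{thm:L1} pinches $\mathfrak{p}_2(4/3)=0$, one can take a near-extremal sequence $\psi_\varepsilon$ at $q=4/3$ with non-constant modulus: strict Jensen gives $\|\psi_\varepsilon\|_q<\|\psi_\varepsilon\|_{4/3}$ for $q<4/3$, while near-extremality gives $\|P_+\psi_\varepsilon\|_0$ arbitrarily close to $\|\psi_\varepsilon\|_{4/3}$, so $\|P_+\psi_\varepsilon\|_0>\|\psi_\varepsilon\|_q$ for small $\varepsilon$.

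The main obstacle lies in Part~1: identifying the correct coupled test functions and performing the sharp asymptotic analysis of the resulting double Fourier sums. The coupling must be arranged so that the iterated Riesz projection $P_+^{(z_1)}P_+^{(z_2)}$ produces a function whose $L^p$-norm diverges at precisely the 2D rate $\mathfrak{a}_2(q)$, rather than only at the coarser 1D rate $\mathfrak{a}_1(q)$. This delicate matching, governed by the interaction of Gamma-function ratios in the Fourier coefficients of the candidate family, is where the creative work of the argument is concentrated.
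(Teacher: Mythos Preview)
Your proposal identifies the correct strategic shape --- test functions with genuine coupling between $z_1$ and $z_2$, since tensor products can only recover the one-variable bound --- but it stops short of the decisive step: you never actually exhibit the family $\psi_\varepsilon$. As you yourself concede, ``identifying the correct coupled test functions'' is where the creative work lies, and that is precisely what is missing. Your language of ``singularities'', ``divergence at the 2D critical rate'', ``Beta integrals'', and ``Gamma-function ratios'' strongly suggests you have in mind a reproducing-kernel construction in the spirit of Theorem~\ref{thm:RPK}, with a pole approaching $\mathbb{T}^2$. The paper does \emph{not} proceed this way, and it is unclear that such a construction can be pushed through.

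The paper's test function is instead a \emph{small perturbation of a unimodular $2$-homogeneous polynomial}: with $f_1(z)=z_1z_2$, $f_2(z)=z_1^2-z_2^2$, and $f=f_1+\varepsilon f_2$, one sets $\psi=\mathbf{N}_{q^\ast}f=|f|^{q^\ast-2}f$. The key algebraic point is that $\overline{f_1}f_2$ is purely imaginary, so $|f|^2=1+\varepsilon^2|f_2|^2$; hence $|f|^{q^\ast-2}$ expands as a binomial series in $\varepsilon^2$ whose moments against powers of $|f_2|^2$ are central binomial coefficients. Since $\psi$ is $2$-homogeneous, $\varphi=P_+\psi$ is forced to be $af_1+\varepsilon b f_2$ with $a,b$ computable term by term. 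Comparing the $\varepsilon$-expansions of $\|\psi\|_q$ and $\|\varphi\|_p$, the $\varepsilon^2$ coefficients always agree, and the $\varepsilon^4$ coefficients give exactly $p\le 4-q^\ast=\mathfrak{a}_2(q)$. The asymptotic parameter is $\varepsilon\to 0$, not a kernel singularity; no Beta or Gamma asymptotics enter.

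Your alternative route to the range $q<4/3$ also has a gap. You claim one can take a near-extremal sequence at $q=4/3$ with non-constant modulus and win by strict Jensen; but you have not shown that such a sequence exists, nor that the Jensen gap $\|\psi_\varepsilon\|_{4/3}-\|\psi_\varepsilon\|_q$ does not close faster than the near-extremality defect $\|\psi_\varepsilon\|_{4/3}-\|P_+\psi_\varepsilon\|_0$. In the paper the same $\varepsilon$-family settles this directly: one checks that the expansion of $\|\varphi\|_p$ persists at $p=0$, and the $\varepsilon^4$ comparison then yields $\|\varphi\|_0>\|\psi\|_q$ for every $1<q<4/3$ when $\varepsilon$ is small.
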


In view of \eqref{eq:upandaway}, this theorem implies the following.

\begin{corollary}
	If the critical exponent conjecture for the Riesz projection holds for $d=1$, then it also holds for $d=2$.
\end{corollary}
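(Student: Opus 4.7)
The plan is to combine the two halves of the statement that are, in essence, already present in the excerpt. The upper bound $\mathfrak{p}_2(q) \leq \mathfrak{a}_2(q)$ for $q \geq 4/3$ and the identity $\mathfrak{p}_2(q) = -1$ for $q < 4/3$ are precisely what Theorem~\ref{thm:d2} delivers, so no new work is needed on that side. What remains is to produce the matching lower bound $\mathfrak{p}_2(q) \geq \mathfrak{a}_2(q)$ on $[4/3,\infty]$, assuming the one-dimensional critical exponent conjecture, which is the content of \eqref{eq:upandaway} at $d=2$.

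To make this explicit, I would invoke Lemma~\ref{lem:holder}, which gives the Hölder-type bound $\mathfrak{p}_2(q) \geq \mathfrak{p}_1(\mathfrak{p}_1(q))$ provided that $\mathfrak{p}_1(q) \geq 1$ so that the outer exponent lies in the admissible range. Under the assumed one-dimensional conjecture, $\mathfrak{p}_1(q) = \mathfrak{a}_1(q) = 4(1-1/q)$, and this quantity is at least $1$ precisely when $q \geq 4/3$. For such $q$ one obtains
\[\mathfrak{p}_2(q) \geq \mathfrak{p}_1(\mathfrak{p}_1(q)) = \mathfrak{a}_1(\mathfrak{a}_1(q)) = \mathfrak{a}_2(q),\]
where the last equality is the functional equation $\mathfrak{a}_{d_1+d_2}(q) = \mathfrak{a}_{d_1}(\mathfrak{a}_{d_2}(q))$ noted just before the higher-dimensional conjecture. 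Pairing this with Theorem~\ref{thm:d2} gives $\mathfrak{p}_2(q) = \mathfrak{a}_2(q)$ on $[4/3,\infty]$, while $\mathfrak{p}_2(q) = -1$ on $[1,4/3)$ is the second clause of Theorem~\ref{thm:d2} verbatim.

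There is no genuine obstacle in this argument, since all substantive work sits in Theorem~\ref{thm:d2} and in the derivation of \eqref{eq:upandaway}; the corollary is a bookkeeping exercise. The only point worth highlighting is the compatibility of the two thresholds: the Hölder induction loses force exactly where $\mathfrak{p}_1(q) = 1$, i.e.\ at $q = 4/3 = 2d/(d+1)$ for $d=2$, which is precisely the same value at which Theorem~\ref{thm:d2} switches between the two regimes $\mathfrak{a}_2$ and $-1$. This matching of endpoints is what makes the deduction sharp and lets the one-dimensional conjecture propagate cleanly to $d=2$.
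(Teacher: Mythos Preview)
Your proposal is correct and follows the same route as the paper: the corollary is stated there with the single remark ``In view of \eqref{eq:upandaway}, this theorem implies the following,'' and you have simply unpacked \eqref{eq:upandaway} at $d=2$ via Lemma~\ref{lem:holder} and the functional equation for $\mathfrak{a}_d$, then paired it with Theorem~\ref{thm:d2}. The only cosmetic discrepancy is that Lemma~\ref{lem:holder} is stated with the hypothesis $\tfrac{4}{3}\le q\le\infty$ rather than $\mathfrak{p}_1(q)\ge 1$, but under the assumed one-dimensional conjecture these conditions coincide, as you observe.
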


Since $\mathfrak{p}_1\big(\frac{4}{3}\big)=1$ and $\mathfrak{p}_1(1)=0$, we are also able to compute $\mathfrak{p}_2(4/3)$.

\begin{corollary} \label{cor:p243}
	$\mathfrak{p}_2 (4/3) = 0$.
\end{corollary}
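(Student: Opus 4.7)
The plan is to bracket $\mathfrak{p}_2(4/3)$ between matching upper and lower bounds that are already available at this point of the paper.

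For the upper bound, I would simply evaluate $\mathfrak{a}_2$ at $q=4/3$: plugging into the continued-fraction expression gives $q-2 = -2/3$, so $2/(q-2) = -3$, hence $d + 2/(q-2) = -1$ and therefore $\mathfrak{a}_2(4/3) = 2 + 2/(-1) = 0$. Since $4/3$ is the left endpoint of the range where Theorem~\ref{thm:d2} applies, it directly yields $\mathfrak{p}_2(4/3) \le 0$.

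For the lower bound, I would use the iterated Hölder inequality \eqref{eq:holder} with $d=1$, giving $\mathfrak{p}_2(q) \ge \mathfrak{p}_1(\mathfrak{p}_1(q))$. Applied at $q = 4/3$, the Marzo--Seip result $\mathfrak{p}_1(4/3) = 1$ (mentioned in the introduction as the dual of $\mathfrak{p}_1(\infty) = 4$) combined with Corollary~\ref{cor:L1} (which asserts $\mathfrak{p}_1(1) = 0$) gives $\mathfrak{p}_2(4/3) \ge \mathfrak{p}_1(1) = 0$.

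Combining the two inequalities gives $\mathfrak{p}_2(4/3) = 0$. There is no real obstacle here: the statement is an immediate consequence of Theorem~\ref{thm:d2}, the Hölder-type monotonicity relation \eqref{eq:holder}, and the one-dimensional endpoint values $\mathfrak{p}_1(4/3) = 1$ and $\mathfrak{p}_1(1) = 0$ that have already been recorded.
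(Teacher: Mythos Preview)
Your proposal is correct and matches the paper's own argument exactly: the upper bound $\mathfrak{p}_2(4/3)\le \mathfrak{a}_2(4/3)=0$ comes from Theorem~\ref{thm:d2}, and the lower bound $\mathfrak{p}_2(4/3)\ge \mathfrak{p}_1(\mathfrak{p}_1(4/3))=\mathfrak{p}_1(1)=0$ comes from \eqref{eq:holder} together with the known values $\mathfrak{p}_1(4/3)=1$ and $\mathfrak{p}_1(1)=0$. This is precisely what the paper records in the sentence preceding the corollary.
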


See Figure~\ref{fig:p2q} for a visualization of the current status on the critical exponent conjecture in dimension $d=2$.

The problem of constructing suitable examples appears to be much harder when $d\geq 3$. Indeed, it may well be that the cases $d=1$ and $d=2$ are exceptional and that new phenomena occur for $d\ge 3$, in a similar way as was seen in a somewhat related context in \cite[Theorem~1.2]{BOS2021}. This state of affairs means that our conjecture in higher dimensions is not as well founded as in the low dimensions $d=1$ and $d=2$.

However, we do have examples exhibiting the right asymptotic behavior when $d\to \infty$. Konyagin, Queff\'{e}lec, Saksman, and Seip have recently established that $\mathfrak{p}_d(q) \to 2$ as $d \to \infty$ for every fixed $2 \leq q \leq \infty$ (see \cite[Corollary~2]{KQSS22}), but their argument does not provide an effective upper bound for $\mathfrak{p}_d(q)$ to compare with the critical exponent conjecture. In analogy with this result, it is natural to expect that $\mathfrak{q}(d) \to 2$ as $d \to \infty$ (see Section~\ref{sec:Td} for an elaboration).

\subsection*{Organization} In Section~\ref{sec:sota}, we summarize the state-of-the-art on the critical exponent conjecture and discuss its relationship with dual extremal problem on Hardy spaces. Section~\ref{sec:L1} contains new evidence in favour of the critical exponent conjecture, including the proof of Theorem~\ref{thm:L1} and a discussion about its associated extremal functions. Section~\ref{sec:Td} concerns the critical exponent conjecture in higher dimensions and presents in particular the proof of Theorem~\ref{thm:d2}.

\section{The state-of-the-art and dual extremal problems} \label{sec:sota}
It is well-known that the Riesz projection extends to a bounded linear operator from $L^q$ to $H^q$ for $1<q<\infty$. In fact, Hollenbeck and Verbitsky \cite{HB00} proved that
\begin{equation} \label{eq:HV}
	\|P_+\|_{L^q \to H^q} = \frac{1}{\sin{\pi/q}}
\end{equation}
Although there are $\psi$ in $L^\infty$ such that $P_+ \psi$ is not in $H^\infty$, it follows from the above and H\"older's inequality that $P_+\psi$ is in $H^p$ for every $p < \infty$. We will let $H^p_0$ denote the subspace of $H^p$ consisting of functions that vanish at the origin. If $1<p<\infty$, then every $\psi$ in $L^q$ can be expressed as $\psi = \varphi + \overline{\varphi_0}$ for $\varphi$ in $H^q$ and $\varphi_0$ in $H^q_0$.

We begin with the proof of the estimate $\mathfrak{p}(\infty) \geq 4$. Let $P_-$ denote the orthogonal projection from $L^2$ to $\overline{H^2_0}$, so that $\psi = P_+ \psi + P_- \psi$. The following argument is due to Marzo and Seip \cite{MZ11} although the second statement in the result is new.

\begin{theorem} \label{thm:MZ}
	If $\psi$ is in $L^\infty$, then
	\[\|P_+ \psi\|_4 \leq \|\psi\|_\infty.\]
	Equality holds if and only if $\psi = CI$ for a constant $C$ and an inner function $I$.
\end{theorem}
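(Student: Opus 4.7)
The plan is to follow the argument of Marzo and Seip \cite{MZ11} and to extract the equality characterization by tracing when their chain of inequalities becomes tight. Decompose $\psi = f + \bar g$ with $f = P_+\psi \in H^2$ and $\bar g = P_-\psi$, so that $g \in H^2_0$. By the Hollenbeck--Verbitsky bound \eqref{eq:HV}, $f \in H^p$ for every $p < \infty$; in particular $f^2 \in H^2$ and $\|f\|_4^2 = \|f^2\|_2$. The heart of the matter is the inequality
\begin{equation} \label{eq:mskey}
\|f^2\|_2 \leq \|\psi\|_\infty \|f\|_2,
\end{equation}
whose proof by Marzo and Seip proceeds via a duality argument exploiting that $f^2 \in H^2$ together with the self-adjointness of $P_+$.

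Granting \eqref{eq:mskey}, observe that Parseval applied to the orthogonal decomposition $\psi = f + \bar g$ gives $\|\psi\|_2^2 = \|f\|_2^2 + \|g\|_2^2$, so $\|f\|_2 \leq \|\psi\|_2$, while the elementary inequality $\|\psi\|_2 \leq \|\psi\|_\infty$ holds since $\mathbb{T}$ has unit mass. Chaining,
\begin{equation} \label{eq:chain}
\|P_+\psi\|_4^2 = \|f^2\|_2 \leq \|\psi\|_\infty \|f\|_2 \leq \|\psi\|_\infty \|\psi\|_2 \leq \|\psi\|_\infty^2,
\end{equation}
which proves $\|P_+\psi\|_4 \leq \|\psi\|_\infty$.

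For the equality characterization, suppose $\|P_+\psi\|_4 = \|\psi\|_\infty$; if this quantity is zero the result is trivial, so assume it is positive. Then equality holds throughout \eqref{eq:chain}. The equality $\|f\|_2 = \|\psi\|_2$ forces $\|g\|_2 = 0$, so $g \equiv 0$ and $\psi = f \in H^2$. The equality $\|\psi\|_2 = \|\psi\|_\infty$ forces $|\psi|$ to be constant almost everywhere on $\mathbb{T}$. Combining, $\psi$ is a bounded analytic function of constant modulus, so $\psi = CI$ for a constant $C$ and an inner function $I$. The converse is immediate: for such $\psi$, $P_+\psi = CI \in H^\infty$ and $\|P_+\psi\|_4 = |C| = \|\psi\|_\infty$.

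The main obstacle is \eqref{eq:mskey}, which is the technical core of Marzo and Seip's approach; once it is in place, the equality characterization follows automatically by inspecting the chain \eqref{eq:chain}.
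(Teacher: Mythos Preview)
Your proof is incomplete: the key inequality \eqref{eq:mskey} is never established. You describe it as ``the technical core of Marzo and Seip's approach'' and say its proof ``proceeds via a duality argument exploiting that $f^2\in H^2$ together with the self-adjointness of $P_+$'', but you do not carry this out, and in fact this description does not match the actual argument. The paper (reproducing Marzo--Seip) uses no duality and no self-adjointness; the two-line proof is that $(P_+\psi)^2\perp (P_-\psi)^2$ in $L^2$ (one lies in $H^2$, the other in $\overline{H^2_0}$) together with the factorization $(P_+\psi)^2-(P_-\psi)^2=\psi\,(P_+\psi-P_-\psi)$, which gives
\[
\|f^2\|_2^2\le \|(P_+\psi)^2-(P_-\psi)^2\|_2^2=\|\psi(P_+\psi-P_-\psi)\|_2^2\le \|\psi\|_\infty^2\,\|\psi\|_2^2.
\]
Note that this yields $\|f^2\|_2\le \|\psi\|_\infty\|\psi\|_2$, \emph{not} your stronger claim $\|f^2\|_2\le \|\psi\|_\infty\|f\|_2$; the latter would require a separate argument which you have not supplied.

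This matters for your equality analysis as well. You extract $g=0$ from the middle inequality $\|f\|_2\le \|\psi\|_2$ in your chain \eqref{eq:chain}, but that step is only present because of your unproved form of \eqref{eq:mskey}. In the paper's chain, $g=0$ is read off instead from equality in the orthogonality step $\|(P_+\psi)^2\|_2\le \|(P_+\psi)^2-(P_-\psi)^2\|_2$, and constant modulus from the pointwise H\"older step. Once you write out the actual two-line argument, both the inequality and the equality characterization follow immediately; as it stands, your proposal defers precisely the part that needs proving.
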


\begin{proof}
	Since $\psi$ is in $L^\infty$, it is plain that both $(P_+ \psi)^2$ and $(P_- \psi)^2$ are in $L^2$ and that $(P_+ \psi)^2 \perp (P_-\psi)^2$. Consequently,
	\begin{align}
		\|P_+ \psi\|_4^4 = \|(P_+ \psi)^2\|_2^2 &\leq \|(P_+ \psi)^2-(P_- \psi)^2\|_2^2 \label{eq:MZ1} \\
		&=\|\psi (P_+ \psi - P_- \psi)\|_2^2 \leq \|\psi\|_\infty^2 \|\psi\|_2^2 \leq \|\psi\|_\infty^4. \label{eq:MZ2}
	\end{align}
	The estimate in \eqref{eq:MZ1} is saturated if and only if $P_- \psi = 0$. Consequently, $\psi$ is in $H^\infty$. The inequality in \eqref{eq:MZ2} is saturated if and only if $\psi$ has constant modulus.
\end{proof}

Since the Riesz projection is self-adjoint, we get the following result from H\"older's inequality.

\begin{corollary} \label{cor:43}
	If $\psi$ is in $L^{4/3}$, then
	\[\|P_+ \psi\|_1 \leq \|\psi\|_{4/3}.\]
	Equality holds if and only if $\psi = CI$ for a constant $C$ and an inner function $I$.
\end{corollary}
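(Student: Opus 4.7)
The plan is to obtain this corollary as the dual statement of Theorem~\ref{thm:MZ}, exploiting the self-adjointness of $P_+$ on $L^2$. For every $g$ in $L^\infty$ with $\|g\|_\infty \leq 1$, I would apply H\"older's inequality with the conjugate exponents $4/3$ and $4$ and then invoke Theorem~\ref{thm:MZ} to write
\[\left|\int_0^{2\pi} P_+\psi \cdot \bar{g}\,\frac{d\theta}{2\pi}\right| = \left|\int_0^{2\pi} \psi \cdot \overline{P_+ g}\,\frac{d\theta}{2\pi}\right| \leq \|\psi\|_{4/3}\|P_+ g\|_4 \leq \|\psi\|_{4/3}.\]
Taking the supremum over such $g$ and invoking $L^1$--$L^\infty$ duality would then yield the norm bound $\|P_+\psi\|_1 \leq \|\psi\|_{4/3}$.

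The forward direction of the equality characterization is immediate: if $\psi = CI$ for a constant $C$ and an inner function $I$, then $\psi$ lies in $H^\infty$, so $P_+\psi = \psi$, giving $\|P_+\psi\|_1 = |C| = \|\psi\|_{4/3}$. For the converse, I would assume that $\psi$ is nontrivial and that equality holds. Then $P_+\psi$ is a nonzero $H^1$ function, hence nonvanishing almost everywhere, and the choice $g = P_+\psi/|P_+\psi|$ lies in $L^\infty$ with $\|g\|_\infty = 1$ and realizes the $L^1$--$L^\infty$ supremum. Equality must then propagate through each step of the chain above.

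From the equality case in Theorem~\ref{thm:MZ} I would conclude that $g = C'I'$ for a constant $C'$ of modulus one and an inner function $I'$; since $g$ is then in $H^\infty$, we have $P_+g = g$ with constant modulus one. Equality in H\"older's inequality forces $|\psi|^{4/3}$ to be proportional to $|P_+g|^4$, so $|\psi|$ is constant almost everywhere, and the accompanying phase condition makes $\psi \overline{g}$ a constant, whence $\psi$ is a constant multiple of $I'$. The main subtle point in this plan is the almost everywhere nonvanishing of the nontrivial $H^1$ function $P_+\psi$, which is what legitimizes the choice of $g$ attaining the duality supremum; everything else is the mechanical propagation of equality backwards through H\"older and Theorem~\ref{thm:MZ}.
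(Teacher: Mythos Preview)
Your proposal is correct and follows essentially the same duality argument as the paper: pair $P_+\psi$ with a unimodular $g$, move $P_+$ across by self-adjointness, apply H\"older with exponents $4/3$ and $4$, and invoke Theorem~\ref{thm:MZ}. The paper picks the optimizing $g=P_+\psi/|P_+\psi|$ from the outset rather than taking a supremum, but this is cosmetic. Your treatment of the equality case is in fact more explicit than the paper's: you correctly use both the magnitude and phase conditions in H\"older's equality (together with $P_+g=g$ once $g$ is known to be inner) to pass from ``$g$ is a unimodular multiple of an inner function'' to ``$\psi$ itself is a constant multiple of that inner function,'' whereas the paper's write-up stops at identifying the inner--outer structure of $P_+\psi$.
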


\begin{proof}
	We write $\varphi = P_+ \psi$ and assume that $\varphi = I \Phi$ is nontrivial. We set $g = \varphi/|\varphi|$ and compute 
	\[\|P_+ \psi\|_1 = \langle P_+ \psi, g \rangle = \langle \psi, P_+ g \rangle \leq \|\psi\|_{4/3} \|P_+ g\|_4 \leq \|\psi\|_{4/3} \|g\|_\infty = \|\psi\|_{4/3}.\]
	In the final inequality we used Theorem~\ref{thm:MZ}, which means that equality can hold throughout if and only if $g = C I$. Since $\|g\|_\infty=1$ we have $g=I$ up to a unimodular constant. By the definition of $g$, this means that $\Phi = C \neq 0$.
\end{proof}

As remarked in \cite{MZ11}, if we use Riesz--Thorin interpolation between $\mathfrak{p}(\infty) = 4$ and $\mathfrak{p}(2)=2$, we obtain 
\begin{equation} \label{eq:btwn2inf}
	\mathfrak{p}(q) \geq \frac{4q}{q+2}
\end{equation}
when $2 \leq q \leq \infty$. By duality, we then also get 
\begin{equation} \label{eq:btwn12}
	\mathfrak{p}(q) \geq \frac{2q}{4-q}
\end{equation}
when $\frac{4}{3} \leq q \leq 2$. These lower bounds are strictly smaller than the conjectured value $\mathfrak{p}(q) = 4/q^\ast$ in the whole range $\frac{4}{3}<q<\infty$, except in the trivial case $q=2$. Since no known interpolation result applies when $0<p<1$, we then only have the bound $\mathfrak{p}(q)\geq 0$, which is a trivial consequence of Theorem~\ref{thm:L1}. See Figure~\ref{fig:p1q} above.

We refer to \cite{BCOS2023,Kulikov22,Llinares22} for examples of other problems of this kind, with the classical tools of interpolation theory being either inadequate or unavailable. In particular, in \cite{Kulikov22,Llinares22}, the challenge was to break the convexity bound obtained by Riesz--Thorin interpolation, and this is the main problem in our context as well when $\frac{4}{3}<q<\infty$.

As mentioned in the introduction, when we study the critical exponent conjecture for the Riesz projection, we are only interested in the functions $\psi$ in $L^q$ of minimal norm among all functions with Riesz projection $P_+ \psi$. This leads us to the theory of dual extremal problems on Hardy spaces. A complete account of the theory can be found in Chapter 7 and Chapter 8 in Duren \cite{Duren70}.

The starting point is a nontrivial function $\varphi$ in $H^q$ for $1 \leq q < \infty$. The \emph{dual extremal problems} generated by $\varphi$ are
\[\sup_{f \in H^{q^\ast}} \frac{|\langle f, \varphi \rangle|}{\|f\|_{q^\ast}} \qquad \text{and} \qquad \inf_{\varphi_0 \in H^q_0} \|\varphi + \overline{\varphi_0}\|_q.\]
The expression on the left-hand side is nothing more than the norm of the bounded linear functional on $H^{q^\ast}$ generated by $\varphi$, while the expression on the right-hand side is trivially an upper bound for the norm of said linear functional. 

The salient points of the theory are as follows: The supremum is equal to the infimum and both are attained. In fact, there is a unique $\varphi_0$ in $H^q_0$ attaining the infimum. The corresponding function 
\[\psi = \varphi+\overline{\varphi_0}\]
is called the \emph{extremal kernel}, while $\varphi$ is called the \emph{natural kernel}. It is plain that there is no unique function attaining the supremum, but if we require that $\langle f, \varphi \rangle > 0$, then $f$ is unique up to multiplication by positive constants. Consequently, for such $f$ we deduce from the fact that $\varphi = P_+ \psi$ that
\begin{equation} \label{eq:holderme}
	\|\psi\|_q \|f\|_{q^\ast} = \langle f, \psi\rangle.
\end{equation}
Note that this equality appears with $q=4/3$ in the proof of Corollary~\ref{cor:43}.

For $1 < p < \infty$, consider next the nonlinear operator
\[\mathbf{N}_p f(z) \coloneq \begin{cases}
	|f(z)|^{p-2} f(z), & \text{if } f(z) \neq 0; \\
	0, & \text{if } f(z) = 0,
\end{cases}\]
which describes the case of equality in H\"older's inequality and is of relevance to $L^p$ orthogonality. For more see \cite{HSVZ18} and \cite[Chapter~5]{Shapiro71}. Returning to \eqref{eq:holderme}, we see that there is a unique $f$ attaining the supremum in the dual extremal problems such that the extremal kernel $\psi$ satisfies $\psi = \mathbf{N}_{q^\ast} f$. We find it convenient to call this function $f$ the \emph{extremal function}. The following diagram illustrates the one-to-one correspondences between the natural kernel $\varphi$ in $H^q$, the extremal function $f$ in $H^{q^\ast}$, and the extremal kernel $\psi$ in $L^q$.
\[\begin{tikzcd}
	& & \psi \arrow[dd,"P_+"] \\ & & \\ 
	f \arrow[uurr,"\mathbf{N}_{q^\ast}"] & & \arrow[ll] \varphi
		\end{tikzcd}\]
This means that when we consider the critical exponent conjecture for $1 < q < \infty$, we can restrict our attention to $\psi$ of the form $\psi = \mathbf{N}_{q^\ast} f$ for $f$ in $H^{q^\ast}$.

In the case $q=1$ there is a unique inner function $I$ such that $\|\psi\|_1 = \langle I, \varphi \rangle$ and, consequently, such that $\overline{I} \psi \geq 0$ almost everywhere. We say that this inner function is the \emph{extremal function}. The key difference between the case $q=1$ and $1<q<\infty$ is that the same inner function $I$ may be the extremal function for several different extremal kernels $\psi$, so the arrow from $f$ to $\psi$ in the diagram breaks down.

In the case $q=\infty$, the situation described above may break down in several ways. We will not go into details and refer again to Duren \cite{Duren70} for a complete account. However, \emph{if} there is a function $f$ in $H^1$ that attains the supremum, then there is a unique extremal kernel of the form $\psi = C \mathbf{N}_1 f$ where $C$ is equal to the supremum so that $\|\psi\|_\infty = \|\varphi\|_{(H^1)^\ast}$.

\section{On extremal functions and the endpoint case \texorpdfstring{$q=1$}{q=1}} \label{sec:L1}
We begin by explaining how $\|\varphi\|_0$ can be computed for a nontrivial function in $H^p$. As mentioned above, every nontrivial function $\varphi$ in $H^p$ enjoys the inner-outer factorization $\varphi = I \Phi$. Since $|\varphi| = |\Phi|$ almost everywhere on $\mathbb{T}$ and since $\log|\Phi|$ is harmonic in $\mathbb{D}$, we have
\begin{equation} \label{eq:explogouter}
	\|\varphi\|_0 = \exp\left(\int_0^{2\pi} \log|\Phi(e^{i\theta})|\,\frac{d\theta}{2\pi}\right) = \exp\left(\log|\Phi(0)|\right) = |\Phi(0)|.
\end{equation}

We will next consider the dual extremal problems in a specific case and see how it relates to the critical exponent conjecture. Let $w$ be a point in the unit disc and let $k_w$ denote the natural kernel
\[k_w(z) \coloneq \frac{1}{1-\overline{w}z}.\]
If $f$ is a function in $H^{q^\ast}$ for $1\leq q^{\ast} \leq \infty$, then $f(w) = \langle f, k_w \rangle$. Recall next the well-known sharp pointwise estimate
\begin{equation} \label{eq:pest}
	|f(w)| \leq \left(1-|w|^2\right)^{-\frac{1}{q^\ast}} \|f\|_{q^{\ast}}.
\end{equation}
In the case $q^\ast=\infty$, the estimate \eqref{eq:pest} is simply the maximum modulus principle and, hence, the estimate is saturated if and only $f$ is identically equal to a constant. The standard proof of \eqref{eq:pest} for $1 \leq q^\ast < \infty$ goes via the inner-outer factorization. It is first established for $q^{\ast}=2$ using the Cauchy--Schwarz inequality and orthogonality, then extended to the general case using the inner-outer factorization. It follows from this argument that the estimate is saturated if and only if
\begin{equation} \label{eq:extremalrpk}
	f(z) = C \left(1-\overline{w}z\right)^{-\frac{2}{q^\ast}}
\end{equation}
for a constant $C\neq0$. By the theory of dual extremal problems, which in this case applies in the whole range $1 \leq q^\ast \leq \infty$, this is equivalent to the assertion that the extremal kernel $\psi$ in $L^q$ for the natural kernel $k_w$ satisfies 
\begin{equation} \label{eq:rpkek}
	\|\psi\|_q = \left(1-|w|^2\right)^{-\frac{1}{q^\ast}}
\end{equation}
for $1 \leq q \leq \infty$. In the case $1<q\leq\infty$, the extremal kernel can be computed using $\mathbf{N}_{q^\ast}$ and \eqref{eq:extremalrpk}. In the case $q=1$, all we can say is that the extremal kernel $\psi$ is a nonnegative function satisfying $\|\psi\|_1 = 1$. A moment's thought reveals that in this case $\psi$ is simply the Poisson kernel at $w$. However, this information about $\psi$ is irrelevant for what we will do next since we will only use \eqref{eq:rpkek}.

\begin{theorem} \label{thm:RPK}
	Suppose that $1\leq q \leq \infty$. The inequality
	\[\|k_w\|_p \leq \|\psi\|_q\]
	holds for all $\psi$ with $P_+ \psi = k_w$ and for all $w$ in the unit disc if and only if $p \leq 4/q^\ast$. If $\psi$ has minimal norm and $P_+ \psi = k_w$, then we have $\| \psi \|_q = \| k_w \|_{4/q^\ast}$ for all $w$ when $q=1$ or $q=2$ and only for $w=0$ otherwise.
\end{theorem}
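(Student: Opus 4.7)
The plan is to reduce the statement to a single scalar inequality involving $\|k_w\|_p$ and $|w|^2$, and then to resolve that inequality by expanding both sides in powers of $|w|^2$.

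By the dual extremal theory summarized above (in particular \eqref{eq:rpkek}), the infimum of $\|\psi\|_q$ over all $\psi$ with $P_+\psi = k_w$ equals $(1-|w|^2)^{-1/q^\ast}$. Hence the asserted bound holds for every admissible $\psi$ and every $w$ in $\mathbb{D}$ if and only if $\|k_w\|_p \le (1-|w|^2)^{-1/q^\ast}$ for all $w$ in $\mathbb{D}$.

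For the forward direction, the plan is to prove the sharper estimate $\|k_w\|_p \le (1-|w|^2)^{-p/4}$ for every $p \ge 0$ and then appeal to the monotonicity of $s \mapsto (1-|w|^2)^{-s}$ to conclude whenever $p \le 4/q^\ast$. Writing $k_w^{p/2}(z) = (1-\bar w z)^{-p/2} = \sum_{n \ge 0} a_n (\bar w z)^n$ with $a_n = \binom{p/2+n-1}{n}$, Parseval yields
\[ \|k_w\|_p^p = \|k_w^{p/2}\|_2^2 = \sum_{n \ge 0} a_n^2 |w|^{2n}, \]
while the binomial series gives $(1-|w|^2)^{-p^2/4} = \sum_{n \ge 0} \binom{p^2/4+n-1}{n} |w|^{2n}$. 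Matching coefficients reduces the inequality to a factorwise comparison of Pochhammer products whose algebraic heart is the identity
\[ (k+1)(p^2/4+k) - (p/2+k)^2 = k(p/2-1)^2 \ge 0. \]

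For the converse direction, a Taylor expansion near $w = 0$ suffices: the expansions $\|k_w\|_p = 1 + (p/4)|w|^2 + O(|w|^4)$ and $(1-|w|^2)^{-1/q^\ast} = 1 + (1/q^\ast)|w|^2 + O(|w|^4)$ force $p/4 \le 1/q^\ast$.

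For the equality characterization at $p = 4/q^\ast$: the factor $k(p/2-1)^2$ vanishes for every $k \ge 0$ only when $p = 2$, corresponding to $q = 2$; the endpoint $p = 0$ (equivalently $q = 1$) is handled separately via \eqref{eq:explogouter}, which gives $\|k_w\|_0 = |k_w(0)| = 1$, matching $(1-|w|^2)^0 = 1$. For $q \notin \{1, 2\}$, the coefficient comparison is strict at $n = 2$ as soon as $w \ne 0$, so equality only occurs at $w = 0$. The main obstacle is spotting the algebraic identity $(k+1)(\sigma^2+k) - (\sigma+k)^2 = k(\sigma-1)^2$; once this is in hand, everything else is routine bookkeeping.
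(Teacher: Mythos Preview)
Your proof is correct and follows essentially the same approach as the paper. Both arguments reduce to the minimal-norm identity \eqref{eq:rpkek}, expand $\|k_w\|_p^p$ and the right-hand side as binomial series in $|w|^2$, read off the necessary condition from the $n=1$ coefficient, and establish sufficiency via the same factorwise inequality $(k+1)\big((p/2)^2+k\big)\ge (p/2+k)^2$ (the paper writes this as $\tfrac{j-1+(p/2)^2}{j}\ge \tfrac{(j-1+p/2)^2}{j^2}$ with $j=k+1$); the only cosmetic difference is that you first isolate the critical bound $\|k_w\|_p\le (1-|w|^2)^{-p/4}$ and then invoke monotonicity in the exponent, whereas the paper absorbs that step into a monotonicity-of-binomial-coefficients remark.
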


\begin{proof}
	The case $q=1$ and $p=0$ is trivial in view of \eqref{eq:explogouter} since $k_w$ is outer and $k_w(0)=1$ for every $w$ in $\mathbb{D}$. We will therefore assume in what follows that $1<q \leq\infty$. Let $\psi$ be the extremal kernel for the natural kernel $k_w$. We set $|w|^2 = r$ and use \eqref{eq:rpkek} and the binomial series to compute
	\[\|\psi\|_q^p = \sum_{n=0}^\infty \binom{n-1+p/q^\ast}{n} r^n.\]
	Writing $\|k_w\|_p^p = \|k_w^{p/2}\|_2^2$ we find that
	\[\|k_w\|_p^p = \sum_{n=0}^\infty \binom{n-1+p/2}{n}^2 r^n.\]
	We can obtain a necessary condition by letting $r \to 0^+$ and checking only the coefficient $n=1$. This yields
	\[\frac{p}{q^\ast} \geq \left(\frac{p}{2}\right)^2 \qquad \iff \qquad \frac{4}{q^\ast} \geq p.\]
	To see that this condition on $p$ is also sufficient need to show that 
	\[\binom{n-1+(p/2)^2}{n} \geq \binom{n-1+p/2}{n}^2.\]
	This inequality follows from the product formula for binomial coefficients and the elementary estimate
	\[\frac{j-1+(p/2)^2}{j} \geq \frac{(j-1+p/2)^2}{j^2},\]
	that holds for $j=1,2,3,\ldots$ as is easily verified. The final inequality is strict for $j>1$ unless $p=2$.
\end{proof}

The ``only if'' part of the previous result was mentioned on \cite[p.~495]{Brevig19}, while the ``if'' part is new. The ``only if'' part of Theorem~\ref{thm:RPK} has the following consequence, which was shown in \cite[Theorem~9]{BOSZ18} with a different proof.

\begin{corollary} \label{cor:upperbound}
	If $1\le q \leq \infty$, then $\mathfrak{p}(q) \leq 4/q^\ast$.
\end{corollary}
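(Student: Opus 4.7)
The plan is to extract the upper bound directly from the ``only if'' half of Theorem~\ref{thm:RPK} by testing the defining inequality for $\mathfrak{p}(q)$ against the reproducing kernels $k_w$, which are precisely the natural kernels already analyzed in that theorem. First, I would unpack the definition: if $p$ belongs to the set on the right-hand side of the definition of $\mathfrak{p}(q)$, then $\|P_+\psi\|_p \leq \|\psi\|_q$ must hold for every $\psi$ in $L^q$; specializing to those $\psi$ that satisfy $P_+\psi = k_w$ for an arbitrary point $w$ in the unit disc, this becomes $\|k_w\|_p \leq \|\psi\|_q$.

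Second, since the resulting inequality is valid for every such $\psi$ and every $w$ in $\mathbb{D}$, it is exactly the hypothesis of the ``only if'' direction of Theorem~\ref{thm:RPK}, which forces $p \leq 4/q^\ast$. Passing to the supremum over admissible $p$ then gives $\mathfrak{p}(q) \leq 4/q^\ast$, as claimed.

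I expect no real obstacle here, since Theorem~\ref{thm:RPK} has already been established and its statement is engineered for exactly this application. The only minor bookkeeping is the endpoint $q = 1$, where $4/q^\ast = 0$; there the test reduces to the trivial observation recorded at the start of the proof of Theorem~\ref{thm:RPK}, namely that $\|k_w\|_0 = |k_w(0)| = 1 = |\widehat{\psi}(0)| \leq \|\psi\|_1$ whenever $P_+\psi = k_w$. This gives a uniform proof covering the full range $1 \leq q \leq \infty$.
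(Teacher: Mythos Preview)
Your proposal is correct and follows exactly the route the paper intends: the corollary is stated immediately after Theorem~\ref{thm:RPK} with the remark that it is a consequence of the ``only if'' part, and you have simply spelled out that deduction in detail, including the harmless endpoint bookkeeping at $q=1$.
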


In combination with Theorem~\ref{thm:MZ} and Corollary~\ref{cor:43}, we now recover that $\mathfrak{p}(4/3)=1$ and $\mathfrak{p}(\infty)=4$.

We next turn to the proof of Theorem~\ref{thm:L1} and begin with a small technical point. Although there are $\psi$ in $L^1$ such that $P_+ \psi$ is not in $H^1$, a classical theorem (see e.g. \cite[Chapter VII.2]{Zygmund2002}) asserts that there is a constant $C>0$ such that
\[\|P_+ \psi\|_p \leq \frac{C}{1-p} \|\psi\|_1\]
for $0<p<1$ and every $\psi$ in $L^1$. In particular, this means that $P_+ \psi$ enjoys the inner-outer factorization.

\begin{proof}[Proof of Theorem~\ref{thm:L1}]
	Let $\varphi = P_+ \psi$ have inner-outer factorization $\varphi = I \Phi$, so that $\|\varphi\|_0 = |\Phi(0)|$ by \eqref{eq:explogouter}. Writing $\psi = \overline{\varphi_0} + I \Phi$ for $\varphi_0$ in $H^p_0$ for $p<1$, we see that $\Phi = P_+ (\overline{I}\psi)$. Hence $\Phi(0)$ is the $0$th Fourier coefficient of $\overline{I}\psi$, which shows that
	\[|\Phi(0)| \leq \|\overline{I}\psi\|_1 = \|\psi\|_1.\]
	We have $|\Phi(0)|=\|\psi\|_1$ if and only if $\overline{I}\psi \geq0$ almost everywhere.
\end{proof}

Let us compare the case of equality in Theorem~\ref{thm:L1} with that of Theorem~\ref{thm:MZ} and Corollary~\ref{cor:43}. In the latter case, equality was attained only when $\psi = CI$ for a constant $C$ and an inner function $I$. Such functions yield trivially equality in Theorem~\ref{thm:L1} as well, but the following example shows that  equality can occur for other functions in this case. Let $\Psi$ be a nonnegative function in $L^1$ that can be written as
\[\Psi = \overline{J} + \Phi,\]
where $J$ is an inner function that vanishes at the origin and $\Phi$ is an outer function. Let $I$ be a proper divisor of $J$ such that $J/I$ vanishes at the origin. Then $\|P_+\psi\|_0 = \|\psi\|_1$ when $\psi = I \Psi$. Perhaps the simplest example is obtained from $J(z)=z^2$, $\Phi(z) = 2+z^2$, and $I(z)=z$, so that 
\[\psi(e^{i\theta}) = e^{-i\theta} + 2e^{i\theta} + e^{3i\theta} = 2e^{i\theta} (1+\cos{2\theta}).\]
Another example can be found in Theorem~\ref{thm:RPK}. If $\psi$ is the Poisson kernel at $w$ in $\mathbb{D}$, the $P_+ \psi = k_w$ and $1 = \|\psi\|_1 = \|k_w\|_0$. In this case, the inner function $I$ is identically equal to $1$ and $\psi \geq 0$.

We believe that the endpoint case $q=1$ is an outlier in this context and that equality in the general case should be the same as in the cases $q=\infty$ and $q=4/3$. 

\begin{conjecture}
	Suppose that $1<q<2$ or $2<q\leq \infty$. If there is a nontrivial function $\psi$ in $L^q$ such that
	\[\|P_+ \psi \|_{\mathfrak{p}(q)} = \|\psi\|_q,\]
	then there is a constant $C\neq0$ and an inner function $I$ such that $\psi = CI$. 
\end{conjecture}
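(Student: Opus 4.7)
Since this is a conjecture, my plan is necessarily speculative. The strategy I would pursue combines the self-duality of the critical-exponent relation $p=4/q^{\ast}$ with the rigidity structure of dual extremal problems from Section~\ref{sec:sota}.

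First I would reduce to the case $2<q<\infty$ by duality. Given $\psi$ with $\|P_+\psi\|_p=\|\psi\|_q$, set $g=\mathbf{N}_p(P_+\psi)/\|P_+\psi\|_p^{p-1}$, so $\|g\|_{p^{\ast}}=1$. The self-adjointness of $P_+$ and the identity $\langle\varphi,g\rangle=\langle\psi,P_+g\rangle$ force $\|P_+g\|_{q^{\ast}}=1$, exhibiting $g$ as an extremal for the dual critical-exponent inequality with parameters $(p^{\ast},q^{\ast})$. A short inner-outer computation shows that $g=CI$ for an inner $I$ forces $|\varphi|$ to be constant and therefore $\varphi=cJ$ for some inner $J$, whence by uniqueness in the dual extremal problem the corresponding extremal kernel is $\psi=cJ$. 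Consequently, the conclusion is invariant under the duality, so the case $4/3<q<2$ reduces to $2<q<\infty$. The range $1<q<4/3$, where $\mathfrak{p}(q)<1$, lies outside Banach duality and would require a separate argument, presumably via continuity from the known endpoint $q=4/3$.

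Fix $2<q<\infty$ and, using uniqueness in the dual extremal problem, write $\psi=\mathbf{N}_{q^{\ast}}f$ for the extremal function $f\in H^{q^{\ast}}$ generated by $\varphi=P_+\psi$. A preliminary simplification: if $\varphi=I\Phi$ is the inner-outer factorization, then $\tilde\psi=\bar I\psi$ satisfies $\|\tilde\psi\|_q=\|\psi\|_q$ while $P_+\tilde\psi=\Phi$, because the complementary piece $\overline{I\varphi_0}$ lies in $\overline{H^q_0}$ and is annihilated by $P_+$. Thus $\tilde\psi$ is again an extremal whose Riesz projection is outer, and a conclusion $\tilde\psi=CJ$ would give $\psi=C(IJ)$ with $IJ$ inner. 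I may therefore assume $\varphi=\Phi$ is outer and aim to deduce that $\psi$ is a constant. Under this normalization, the standard variational analysis at the critical point of $\psi\mapsto\|P_+\psi\|_p/\|\psi\|_q$ yields the Euler-Lagrange equation
\[
P_+\mathbf{N}_p\varphi = \mathbf{N}_q\psi
\]
almost everywhere on $\mathbb{T}$, and the plan is to combine this identity with outerness of $\varphi$ to force first $P_-\psi=0$ (so that $\psi\in H^q$) and then $|\varphi|$ constant; outerness then gives $\varphi=\psi=C$.

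The chief obstacle is the final step: showing that the nonlinear Euler-Lagrange equation rigidly forces constant modulus. For $p=4$, Theorem~\ref{thm:MZ} obtained this via the algebraic identity $(P_+\psi)^2-(P_-\psi)^2=\psi(P_+\psi-P_-\psi)$ together with the orthogonality $(P_+\psi)^2\perp(P_-\psi)^2$ in $L^2$---an identity peculiar to $p=4$. For other $p$, no such polynomial factorization is available and one must work directly with the nonlinear operator $\mathbf{N}_p$. A plausible route is a second-variation calculation showing that the constant extremal is the only critical point of the functional modulo scalar multiplication, combined with a convexity or continuation argument to upgrade this local information to a global rigidity statement. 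Producing such an argument---effectively proving the critical exponent conjecture and its extremal characterization in tandem---is, in my view, the chief difficulty of the problem.
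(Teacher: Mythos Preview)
This statement is a \emph{conjecture}; the paper does not prove it, so there is no proof to compare against. Your proposal is accordingly a strategy rather than a proof, and you flag this accurately.

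Your preliminary reductions are sound. The duality argument correctly transfers an extremal at $q$ to one at $p^\ast$; the potential worry that $q^\ast$ might differ from $\mathfrak{p}(p^\ast)$ is moot, since from $\|P_+g\|_{q^\ast}=\|g\|_{p^\ast}=1$, $\|P_+g\|_{\mathfrak{p}(p^\ast)}\le 1$, and monotonicity of $L^r$ norms one gets equality at the critical exponent as well. The reduction to outer $\varphi$ via $\tilde\psi=\bar I\psi$ is also correct; a point worth making explicit is that $\tilde\psi$ is again the \emph{extremal kernel} for $\Phi$, because $\|\tilde\psi\|_q=\|\psi\|_q=\|\varphi\|_p=\|\Phi\|_p\le\|\psi_\Phi\|_q\le\|\tilde\psi\|_q$ and the extremal kernel is unique. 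Your Euler--Lagrange equation should carry a Lagrange multiplier, and since $\psi=\mathbf{N}_{q^\ast}f$ gives $\mathbf{N}_q\psi=f$, it reads $P_+\mathbf{N}_p\varphi=\lambda f$ with $f\in H^{q^\ast}$, which is consistent with your formulation.

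The genuine gap is precisely where you place it: deducing from $P_+\mathbf{N}_p\varphi=\lambda f$ (with $\varphi$ outer) that $|\varphi|$ is constant. Your own remark that this would ``effectively prov[e] the critical exponent conjecture and its extremal characterization in tandem'' is the crux: since $\mathfrak{p}(q)$ is itself unknown for $q\neq 1,\tfrac{4}{3},2,\infty$, any rigidity argument at the critical exponent is conditional on knowing that exponent, and the second-variation/continuation route you sketch would have to supply both simultaneously. Neither you nor the paper has such an argument. The range $1<q<\tfrac{4}{3}$ is also genuinely outside your framework; the appeal to ``continuity from $q=\tfrac{4}{3}$'' is not a method, since extremals need not vary continuously in $q$ and the target space $H^p$ with $p<1$ has no workable duality.
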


Note that his conjecture is in line with Theorem~\ref{thm:RPK} which exhibits a striking contrast between the case $q=1$ and the cases $1<q<2$ and $2< q \leq \infty$.

\section{Critical exponents in higher dimensions} \label{sec:Td}
We begin by noting that $P_+$ enjoys the same $L^p$ boundedness properties on $\mathbb T^d$ as in the one-dimensional case. Indeed, when $1<q<\infty$, we get from \eqref{eq:HV} that
\[\|P_+ \|_{L^q(\mathbb{T}^d) \to H^q(\mathbb{T}^d)} = \frac{1}{(\sin{\pi/q})^d}.\]
Moreover, taking into account that $P_+$ is a singular integral operator, we may use the Calder\'on--Zygmund decomposition to show that $P_{+}$ is of weak-type $(1,1)$. (See for instance \cite[Section~II.2.4]{Stein70} for a detailed exposition of this fact.) In particular, we have that $P_+$ is bounded from $L^1 (\mathbb{T}^d)$ to $H^p(\mathbb{T}^d)$ for all $0 < p < 1$.

In preparation for the proof of Theorem~\ref{thm:d2}, recall that a function $f$ in $L^1(\mathbb{T}^2)$ is called \emph{$2$-homogeneous} if
\begin{equation} \label{eq:2hom1}
	f(e^{i\theta}z_1,e^{i\theta}z_2) = e^{2i\theta} f(z_1,z_2)
\end{equation}
for almost every $z$ on $\mathbb{T}^2$ and almost every real number $\theta$. It is not difficult to check that $f$ is $2$-homogeneous if and only if
\begin{equation} \label{eq:2hom2}
	\widehat{f}(\alpha) = 0
\end{equation}
for every $\alpha=(\alpha_1,\alpha_2)$ in $\mathbb{Z}^2$ such that $\alpha_1+\alpha_2 \neq 2$. This means that the $2$-homogeneous functions in $H^1(\mathbb{T}^2)$ are simply the polynomials
\[f(z) = az_1^2 + bz_1z_2 + cz_2^2.\]
It follows from \eqref{eq:2hom1} that if $f$ is $2$-homogeneous, then so is $\mathbf{N}_{q^\ast} f$. A consequence of \eqref{eq:2hom2} is that if $\psi$ is $2$-homogeneous, then so is $P_+ \psi$. As a final preparation for the proof of Theorem~\ref{thm:d2}, it will be convenient to use that $\mathfrak{a}_2(q) = 4-q^\ast$.

\begin{proof}[Proof of Theorem~\ref{thm:d2}]
	Suppose that $4/3<q\leq \infty$. Fix $0<\varepsilon<1/4$ and consider the $2$-homogeneous polynomial $f\coloneqq f_1 + \varepsilon f_2$, where
	\[f_1(z) \coloneqq z_1 z_2 \qquad \text{and} \qquad f_2(z) \coloneqq z_1^2-z_2^2.\]
	Guided by the above discussion about dual extremal problems, we set $\psi \coloneqq \mathbf{N}_{q^\ast}f$ and $\varphi \coloneqq P_+ \psi$, where as before $\mathbf{N}_{q^\ast}f\coloneqq |f|^{q^\ast-2} f$. Our strategy is to show that if
	\[\|\varphi\|_p \leq \|\psi\|_q\]
	holds for every $0<\varepsilon<1/4$, then $p \leq 4-q^\ast$. This would show that $\mathfrak{p}_2(q) \leq 4-q^\ast$.  The key idea idea underpinning our choice of $f$ is that $\overline{f_1} f_2$ is purely imaginary. Using this and that $f_1$ is unimodular, we find that
	\begin{equation} \label{eq:imag}
		|f(z)|^2 = 1+\varepsilon^2 |f_2(z)|^2.
	\end{equation}
	If $j\geq0$ is an integer, then using the binomial theorem and orthogonality, we find that
	\begin{equation} \label{eq:2jj}
		\|f_2\|_{2j}^{2j} = \|f_2^j\|_2^2 = \sum_{k=0}^j \binom{j}{k}^2 = \binom{2j}{j}.
	\end{equation}
	If $1<q<\infty$, then we combine \eqref{eq:imag} and \eqref{eq:2jj} to compute
	\[\|\psi\|_q^q = \|\mathbf{N}_{q^\ast} f\|_q^q = \|f\|_{q^\ast}^{q^\ast} = \sum_{j=0}^\infty \binom{q^\ast/2}{j} \binom{2j}{j} \varepsilon^{2j}\]
	to deduce that
	\begin{equation} \label{eq:fq}
		\|\psi\|_q = 1 + (q^\ast-1)\varepsilon^2 + \frac{(q^\ast-1)(3q^\ast-8)}{4}\varepsilon^4 + O(\varepsilon^6).
	\end{equation}
	The equation \eqref{eq:fq} also holds for $q=\infty$, since $\|\psi\|_\infty = 1$ and $\infty^\ast = 1$. Since $f$ is $2$-homogeneous, it follows that $\psi$ and, consequently, $\varphi$ are $2$-homogeneous. Hence we only need to compute $\widehat{\psi}(2,0)$, $\widehat{\psi}(1,1)$, and $\widehat{\psi}(0,2)$ to find $\varphi$. We proceed by using \eqref{eq:imag} to the effect that
	\begin{equation} \label{eq:varphiqast}
		|f(z)|^{q^\ast-2} = \sum_{j=0}^\infty \binom{q^\ast/2-1}{j} \varepsilon^{2j} |f_2(z)|^{2j}.
	\end{equation}
	Making use of the symmetries $f_1(z_2,z_1) = f_1(z_1,z_2)$ and $f_2(z_2,z_1) = - f_2(z_1,z_2)$, we deduce from \eqref{eq:varphiqast} and \eqref{eq:2jj} that
	\[\widehat{\psi}(1,1) = \langle \psi, f_1 \rangle = \langle |f|^{q^\ast-2}, 1 \rangle = \sum_{j=0}^\infty \binom{q^\ast/2-1}{j} \binom{2j}{j}\varepsilon^{2j}.\]
	The symmetries also show that $\widehat{\psi}(2,0) = - \widehat{\psi}(0,2)$, which when used in conjunction with \eqref{eq:varphiqast}, the symmetries again, and \eqref{eq:2jj} allows us to compute
	\[\widehat{\psi}(2,0) = \frac{\langle \psi, f_2 \rangle}{\|f_2\|_2^2} = \frac{\langle |f|^{q^\ast-2}, \varepsilon |f_2|^2 \rangle}{\|f_2\|_2^2}  = \varepsilon \sum_{j=0}^\infty \binom{q^\ast/2-1}{j} \binom{2j+1}{j+1} \varepsilon^{2j}.\]
	It follows that $\varphi = P_+\psi = a f_1 +  \varepsilon b f_2$, for
	\begin{equation} \label{eq:ab}
		a = \sum_{j=0}^\infty \binom{q^\ast/2-1}{j} \binom{2j}{j}\varepsilon^{2j} \qquad \text{and} \qquad b=\sum_{j=0}^\infty \binom{q^\ast/2-1}{j} \binom{2j+1}{j+1} \varepsilon^{2j}.
	\end{equation}
	Since $0<\varepsilon<1/4$, we can compute using \eqref{eq:imag} as above to find that
	\begin{equation} \label{eq:Pf}
		\|\varphi\|_p = a \Bigg(\sum_{j=0}^\infty \binom{p/2}{j} \binom{2j}{j} \left(\frac{b}{a}\right)^{2j} \varepsilon^{2j}\Bigg)^{\frac{1}{p}}.
	\end{equation}
	We extract from \eqref{eq:ab} that
	\begin{align*}
		a &= 1+ (q^\ast-2) \varepsilon^2 + \frac{3}{4}(q^\ast-2)(q^\ast-4) \varepsilon^4+O(\varepsilon^6), \\
		\left(\frac{b}{a}\right)^2 &= 1+ (q^\ast-2) \varepsilon^2 + O(\varepsilon^4),\\
		\left(\frac{b}{a}\right)^4 &= 1+ O(\varepsilon^2),
	\end{align*}
	which when inserted into \eqref{eq:Pf} shows that
	\begin{equation} \label{eq:Pfeps}
		\|\varphi\|_p = 1 +(q^\ast-1) \varepsilon^2 + \frac{p+3(q^\ast)^2 -10q^\ast+4}{4}\varepsilon^4 + O(\varepsilon^6).
	\end{equation}
	As $\varepsilon \to 0^+$, we find from \eqref{eq:fq} and \eqref{eq:Pfeps} that $\|\varphi\|_p \leq \|\psi\|_q$ holds if and only if
	\[\frac{p+3(q^\ast)^2 -10q^\ast+4}{4} \leq \frac{(q^\ast-1)(3q^\ast-8)}{4} \qquad \iff \qquad p \leq 4 - q^\ast.\]
	The same analysis shows that $\| \varphi \|_p > \| \psi \|_{4/3}$ for any $p>0$ if $\varepsilon>0$ is sufficiently small, which means that $\mathfrak{p}_2 (4/3) \leq 0$. It remains to show that $\mathfrak{p}_2 (q) = -1$ when $1 < q < \frac{4}{3}$. Using the Taylor expansion of $\log(1 + x)$ and arguing as above, we get
	\begin{eqnarray*}
		\frac{\| \varphi \|_0}{a} = \exp \left( \frac{1}{2} \sum_{j = 1}^\infty \frac{(-1)^{j+1}}{j} \binom{2j}{j} \left(\frac{b}{a} \right)^{2j} \varepsilon^{2j} \right) = 1 + \left( \dfrac{b}{a} \right)^2 \varepsilon^2 - \left( \dfrac{b}{a} \right)^4 \varepsilon^4 + O(\varepsilon^6).
	\end{eqnarray*}
	This shows that \eqref{eq:Pfeps} holds for $p=0$, which allows us to infer that $\| \varphi \|_0 > \| \psi \|_q$ for any $1 < q < \frac{4}{3}$ if $\varepsilon>0$ is sufficiently small.
\end{proof}

%It would be interesting to know if similar examples could be found on $\mathbb{T}^d$ for $d\geq3$ or if dimension $d=2$ is somehow special in this context.

The following result (which is referred to in \eqref{eq:holder} above) can be extracted from the the proof of \cite[Theorem~4]{MZ11}. We record it here for posterity and sketch the proof.

\begin{lemma} \label{lem:holder}
	Fix $d\geq1$ and $\frac{4}{3} \leq q\leq \infty$. Then 
	\[\mathfrak{p}_{d+1}(q) \geq  \mathfrak{p}_d(\mathfrak{p}_1(q)).\]
\end{lemma}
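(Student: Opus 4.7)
The plan is to exploit the factorization of the Riesz projection $P_+$ on $\mathbb{T}^{d+1}$ as the composition $P_+ = P_+^{(d)}P_+^{(1)}$, where $P_+^{(1)}$ acts slicewise in the last variable $\zeta$ and $P_+^{(d)}$ acts slicewise in the first $d$ variables $z$. These two partial projections commute at the level of Fourier series and their composition reproduces the characteristic function of $\mathbb{N}_0^{d+1}$, so iterating the one-variable and $d$-variable inequalities in the respective definitions of $\mathfrak{p}_1(q)$ and $\mathfrak{p}_d$ should do the job. Set $r = \mathfrak{p}_1(q)$ and $p = \mathfrak{p}_d(r)$, and fix $\psi \in L^q(\mathbb{T}^{d+1})$; the goal is to show that $\|P_+\psi\|_p \leq \|\psi\|_q$.

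First, put $g(z,\zeta) = P_+^{(1)}\psi(z,\cdot)(\zeta)$. By the definition of $\mathfrak{p}_1(q)$, for a.e.\ $z \in \mathbb{T}^d$ one has $\|g(z,\cdot)\|_{L^r(\mathbb{T})} \leq \|\psi(z,\cdot)\|_{L^q(\mathbb{T})}$. I would raise this to the $r$-th power, integrate in $z$, and then apply Hölder on $\mathbb{T}^d$ (which requires $r \leq q$) to pass from the mixed-norm bound to $\|\psi\|_{L^q(\mathbb{T}^{d+1})}$; the outcome is $\|g\|_{L^r(\mathbb{T}^{d+1})} \leq \|\psi\|_{L^q(\mathbb{T}^{d+1})}$. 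Second, put $h(z,\zeta) = P_+^{(d)}g(\cdot,\zeta)(z)$, which by commutativity coincides with $P_+\psi$. By the definition of $\mathfrak{p}_d(r)$, for a.e.\ $\zeta$ one has $\|h(\cdot,\zeta)\|_{L^p(\mathbb{T}^d)} \leq \|g(\cdot,\zeta)\|_{L^r(\mathbb{T}^d)}$. A completely analogous Hölder step (requiring $p \leq r$) combined with the first estimate gives $\|P_+\psi\|_{L^p(\mathbb{T}^{d+1})} \leq \|\psi\|_{L^q(\mathbb{T}^{d+1})}$, and hence $\mathfrak{p}_{d+1}(q) \geq \mathfrak{p}_d(\mathfrak{p}_1(q))$.

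The hypothesis $q \geq 4/3$ enters precisely to guarantee $r = \mathfrak{p}_1(q) \geq \mathfrak{p}_1(4/3) = 1$, which is what lets us treat $g$ as a genuine $L^r$ function on $\mathbb{T}^{d+1}$ and invoke Fubini freely. The side conditions $r \leq q$ and $p \leq r$ needed by the two Hölder steps are instances of the trivially verifiable bound $\mathfrak{p}_d(Q) \leq Q$, which follows from testing against partial sums of the Cauchy kernel (whose $L^p$ and $L^q$ norms behave like $N^{1-1/p}$ and $N^{1-1/q}$). Beyond these routine checks I do not foresee any genuine obstacle: the argument is really just a Fubini/Hölder bookkeeping exercise packaged around the observation that $P_+$ on $\mathbb{T}^{d+1}$ factors through its lower-dimensional partial analogues.
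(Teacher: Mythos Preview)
Your approach is essentially identical to the paper's: factor $P_+^{(d+1)} = P_+^{(d)}P_+^{(1)}$, set $r=\mathfrak{p}_1(q)$ and $p=\mathfrak{p}_d(r)$, and iterate the slicewise estimates together with the power-mean (H\"older) inequality on the probability space $\mathbb{T}^d$ or $\mathbb{T}$. There is one small gap, however: your second step ``raise to the $p$-th power, integrate in $\zeta$, apply H\"older'' is meaningless when $p=\mathfrak{p}_d(r)=0$, and this case does occur under the hypotheses (take $d=1$ and $q=4/3$, so that $r=\mathfrak{p}_1(4/3)=1$ and $p=\mathfrak{p}_1(1)=0$). The paper treats this separately by applying Jensen's inequality to $\log|h(\cdot,\zeta)|$ after invoking the definition of $\mathfrak{p}_d(r)$, and it also disposes of the trivial case $p=-1$ at the outset. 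With those two remarks added, your argument is complete and coincides with the paper's.
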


\begin{proof}
	If $\mathfrak{p}_d(\mathfrak{p}_1(q))=-1$, then there is nothing to prove, so we may assume that $\mathfrak{p}_d(\mathfrak{p}_1(q))\ge 0$. Let $P_+^{(d)}$ denote the Riesz projection with respect to the variables $z_1,z_2,\ldots,z_d$ and $P_+^{(1)}$ the Riesz projection with respect to the variable $z_{d+1}$. Then $P_+^{(d+1)}= P_+^{(d)} P_+^{(1)}$.  We now set $r = \mathfrak{p}_1(q)$ and $p = \mathfrak{p}_d(r)$. When $p>0$, we obtain the asserted result from the definition of $\mathfrak{p}_d(r)$, H\"older's inequality, the definition of $\mathfrak{p}_1(q)$, and then H\"older's inequality again. When $p=0$, we use Jensen's inequality after employing the definition of $\mathfrak{p}_d(r)$. Then we repeat the three remaining steps of the argument in the preceding case $p>0$.
\end{proof}

We will now conclude with some remarks and observations about the minimal admissible exponent $\mathfrak{q}(d)$ and the behavior of $\mathfrak{p}_d(q)$ and $\mathfrak{q}(d)$ when $d\to \infty$. Note that we have already proved that $\mathfrak{q}(1)=1$ and $\mathfrak{q}(2)=\frac{4}{3}$. Using the lower bound \eqref{eq:btwn12} in $d=1$ obtained from Riesz--Thorin interpolation and Lemma~\ref{lem:holder}, we may prove the following general result.
\begin{theorem}
We have $\frac{4}{3} \leq \mathfrak{q}(d)\leq  \frac{2}{1+2^{1-d}}$ for all $d\ge 2$.
\end{theorem}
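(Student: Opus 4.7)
The plan is to establish the two inequalities by quite different arguments.

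For the lower bound $\mathfrak{q}(d) \geq 4/3$, I would prove the monotonicity $\mathfrak{q}(d+1) \geq \mathfrak{q}(d)$ by a dimension lift. Given $f$ in $L^q(\mathbb{T}^d)$, set $F(z_1,\ldots,z_{d+1}) \coloneqq f(z_1,\ldots,z_d)$. Then $\|F\|_q = \|f\|_q$ by Fubini, $\|F\|_0 = \|f\|_0$ from the definition of $\|\cdot\|_0$, and $P_+^{(d+1)} F = P_+^{(d)} f$ (viewed on $\mathbb{T}^{d+1}$), since the Riesz projection in $z_{d+1}$ fixes functions independent of $z_{d+1}$. Any inequality $\|P_+^{(d+1)}\psi\|_0 \le \|\psi\|_q$ therefore specializes to the analogous one on $\mathbb{T}^d$, whence $\mathfrak{p}_{d+1}(q) \geq 0 \Rightarrow \mathfrak{p}_d(q) \geq 0$. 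Iterating down to $\mathfrak{q}(2) = 4/3$ (Corollary~\ref{cor:p243} combined with Theorem~\ref{thm:d2}) yields the lower bound for every $d \geq 2$.

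For the upper bound, let $q_d \coloneqq 2/(1+2^{1-d})$ and proceed by induction on $d$, with base case $d = 2$, where $q_2 = 4/3 = \mathfrak{q}(2)$. Assume $\mathfrak{q}(d) \leq q_d$. Since $q_d$ is strictly increasing to $2$, one has $q_{d+1} \in (4/3, 2)$, so the Riesz--Thorin bound \eqref{eq:btwn12} gives $\mathfrak{p}_1(q_{d+1}) \geq 2 q_{d+1}/(4 - q_{d+1})$. The decisive algebraic identity is
\[ \frac{2 q_{d+1}}{4 - q_{d+1}} = q_d, \]
which can be verified directly, or more transparently by noting that the substitution $a_d = 1/q_d$ converts the recursion into the affine map $a_{d+1} = a_d/2 + 1/4$ with explicit solution $a_d = 1/2 + 2^{-d}$. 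Combining this identity with the inductive hypothesis $q_d \geq \mathfrak{q}(d)$ and the monotonicity of $q \mapsto \mathfrak{p}_d(q)$ (H\"older), Lemma~\ref{lem:holder} gives
\[ \mathfrak{p}_{d+1}(q_{d+1}) \geq \mathfrak{p}_d(\mathfrak{p}_1(q_{d+1})) \geq \mathfrak{p}_d(q_d) \geq \mathfrak{p}_d(\mathfrak{q}(d)) = 0, \]
so $\mathfrak{q}(d+1) \leq q_{d+1}$, closing the induction.

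The argument has no real obstacle: everything pivots on the single identity $f(q_{d+1}) = q_d$ for $f(q) = 2q/(4-q)$. This identity essentially dictates the formula for $q_d$: it is the unique sequence, starting at $4/3$, along which Lemma~\ref{lem:holder} can be chained through the Riesz--Thorin lower bound at every step. Presumably the formula $2/(1+2^{1-d})$ was reverse-engineered from exactly this fixed-point calculation.
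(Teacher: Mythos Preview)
Your proposal is correct and follows essentially the same approach as the paper: the lower bound via monotonicity of $d\mapsto\mathfrak{p}_d(q)$ (which you spell out via the dimension lift, while the paper just asserts it) together with $\mathfrak{q}(2)=4/3$, and the upper bound by chaining Lemma~\ref{lem:holder} with the Riesz--Thorin estimate \eqref{eq:btwn12} along the sequence $q_d=2/(1+2^{1-d})$. The only difference is packaging---you induct forward from $d=2$, the paper iterates backward from $d$ to $2$---and your identification of the key algebraic identity $2q_{d+1}/(4-q_{d+1})=q_d$ is exactly what drives the paper's computation $\mathfrak{p}_1\big(\tfrac{2}{2^{-d}+1}\big)\geq \tfrac{2}{2^{1-d}+1}$.
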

\begin{proof} The lower bound follows from the fact that $d\mapsto \mathfrak{p}_d(q)$ is a decreasing function.  To establish the upper bound, we start by observing that \eqref{eq:btwn12} implies that
\[\mathfrak{p}_1\left(\frac{2}{2^{-d}+1}\right)\geq \frac{2}{2^{1-d}+1}.\]
Hence, invoking Lemma~\ref{lem:holder}, we find that 
\[\mathfrak{p}_d\left(\frac{2}{2^{1-d}+1}\right) \geq \mathfrak{p}_{d-1}\left(\frac{2}{2^{2-d}+1}\right). \]
Iterating this argument $d-2$ times, we deduce that
\[\mathfrak{p}_d\left(\frac{2}{2^{1-d}+1}\right) \geq \mathfrak{p}_{2}\left(\frac{4}{3}\right)=0, \]
which yields the desired upper bound.
\end{proof}
The critical exponent conjecture claims that $\mathfrak{q}(d)=\frac{2d}{d+1}$, and it also anticipates the exact value of $q\mapsto \mathfrak{p}_d(q)$ on the interval $[\mathfrak{q}(d),\infty)$. The following is a weaker conjecture that might be more tangible.

\begin{weakcec}
	The function $q\mapsto \mathfrak{p}_d(q)$ is continuous and strictly increasing on $[\mathfrak{q}(d), \infty)$. Moreover, $\mathfrak{q}(d) \to 2$ as $d \to \infty$.
\end{weakcec}
We note that $q\mapsto \mathfrak{p}_d(q)$ is indeed continuous when $\mathfrak{p}_d(q)\ge 1$. This is a consequence of the Riesz--Thorin interpolation theorem and monotonicity of $q\mapsto \mathfrak{p}_d(q)$. The following is the closest we are able to get in our attempt to  verify the second part of the conjecture.
\begin{theorem} \label{thm:qto2}
	We have $\lim_{d\to \infty} \mathfrak{p}_d(q)\leq 0$ for every $q<2$.
\end{theorem}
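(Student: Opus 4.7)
My plan is to combine the Konyagin--Queff\'elec--Saksman--Seip (henceforth KQSS) theorem with the duality afforded by the self-adjointness of the Riesz projection, and then to close the remaining gap through a direct test-function construction.

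The first step exploits duality. Since $P_+$ is self-adjoint on $L^2(\mathbb{T}^d)$, H\"older's inequality together with the identity $\langle P_+ \psi, g\rangle = \langle \psi, P_+ g\rangle$ yields, for $p, q \in (1, \infty)$, the equivalence
\[ \mathfrak{p}_d(q) \geq p \quad \Longleftrightarrow \quad \mathfrak{p}_d(p^\ast) \geq q^\ast, \]
which is the same duality principle used in Section~\ref{sec:sota} to deduce Corollary~\ref{cor:43} from Theorem~\ref{thm:MZ}.

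The second step translates the KQSS result through this duality. Taking $q = \infty$ (so that $q^\ast = 1$), the assertion $\mathfrak{p}_d(\infty) \to 2$ is equivalent to saying that for every $\eta > 0$ and every $d$ sufficiently large, $\mathfrak{p}_d\bigl((2+\eta)^\ast\bigr) < 1$. As $\eta$ varies over $(0, \infty)$, the number $(2+\eta)^\ast = (2+\eta)/(1+\eta)$ sweeps out the entire interval $(1, 2)$, and combined with the monotonicity of $q \mapsto \mathfrak{p}_d(q)$ this yields $\mathfrak{p}_d(q) < 1$ for every $q < 2$ once $d$ is sufficiently large.

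The third step, which is the crux of the theorem, is to sharpen the bound from $< 1$ to $\leq 0$. This is precisely where the duality of the first step breaks down: $L^p(\mathbb{T}^d)$ is only a quasi-Banach space for $p < 1$, so the above equivalence cannot simply be iterated. To bridge the gap, I would look for an explicit construction of high-dimensional test polynomials, generalising the $2$-homogeneous example $f = z_1 z_2 + \varepsilon(z_1^2 - z_2^2)$ used in the proof of Theorem~\ref{thm:d2} to a $d$-homogeneous polynomial $f_d = z_1 z_2 \cdots z_d + \varepsilon g_d$ in $d$ variables, where $g_d$ is chosen so that $\overline{f_1} g_d$ is purely imaginary and the resulting expansion of $\|P_+ \psi_d\|_p / \|\psi_d\|_q$ (with $\psi_d = \mathbf{N}_{q^\ast} f_d$) exceeds $1$ uniformly as $p \to 0^+$, for $d$ sufficiently large depending on $q$. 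An alternative route would be a quantitative refinement of the KQSS theorem, giving an explicit rate $\mathfrak{p}_d(\infty) - 2 = o(1)$, which one could then iterate through Lemma~\ref{lem:holder} enough times to depress the bound below any prescribed $\varepsilon > 0$.

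The main obstacle is precisely this third step. Tensor products of one-dimensional examples cannot work, since they can only attain $\mathfrak{p}_d(q) \leq \mathfrak{p}_1(q)$, which is strictly positive for $q > 1$, so a genuinely multidimensional ingredient is indispensable to drive the ratio past the barrier $p = 1$ where duality fails.
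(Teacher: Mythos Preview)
Your proposal is incomplete: steps~1 and~2 are correct and recover the bound $\mathfrak{p}_d(q)<1$ for large $d$, but step~3 is only a plan, not a proof. You yourself identify this as ``the crux'' and ``the main obstacle'', and neither of your two suggested routes is carried out. In fact, your second route is misconceived: Lemma~\ref{lem:holder} gives a \emph{lower} bound $\mathfrak{p}_{d+1}(q)\ge \mathfrak{p}_d(\mathfrak{p}_1(q))$, so iterating it cannot depress an upper bound below any $\varepsilon$. The $d$-homogeneous polynomial idea is plausible in spirit but is not fleshed out, and there is no indication that a perturbative construction of that type would be strong enough to push the exponent all the way to~$0$ rather than just slightly below~$1$.

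The paper's proof avoids the duality barrier altogether. The key observation is that the KQSS argument does not intrinsically require $p\ge 1$; it only requires a lower bound $\|D_{R,d}\|_p\gtrsim R^{(d-1)/2}$ for the spherical Dirichlet kernel. The paper establishes this for all $0<p\le 1$ (Lemma~\ref{lem:smallp}) by a simple H\"older interpolation: the known bounds give $\|D_{R,d}\|_1\asymp R^{(d-1)/2}$ and $\|D_{R,d}\|_{p'}\lesssim R^{(d-1)/2}$ for $1<p'<\tfrac{2d}{d+1}$, and combining these via $\|f\|_1\le \|f\|_p^{\theta}\|f\|_{p'}^{1-\theta}$ yields the lower bound for $p<1$. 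With this in hand, the proof of \cite[Thm.~2.1]{KQSS22} goes through verbatim for any fixed $0<p<1$ and $1<q<2$, directly producing $h\in L^q(\mathbb{T}^d)$ with $\|P_+h\|_p>\|h\|_q$. Since $p$ can be taken arbitrarily small, the conclusion follows. So rather than squeezing the result through the $p=1$ bottleneck, the right move is to go back inside the KQSS machinery and observe that it already works below $p=1$.
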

The proof of this theorem is essentially identical to the proof of \cite[Thm. 2.1]{KQSS22}. Of crucial importance is again the spherical Dirichlet kernel
\[ D_{R,d}(z)\coloneqq \sum_{\substack{\alpha\in \mathbb Z^d \\ \|\alpha\|\leq R}} z^\alpha, \]
where $\|\alpha \|^2\coloneqq \alpha_1^2+\cdots + \alpha_d^2$. We need the following estimate.
\begin{lemma}\label{lem:smallp}
There exists an absolute constant $c$, $0<c<1$, such that
\[ \| D_{R,d} \|_p \ge c^{\frac{1}{p}} R^{\frac{d-1}{2}} \]
when $d>1$ and $0<p\le 1$.
\end{lemma}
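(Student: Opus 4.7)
The plan is to use the stationary-phase (Bessel) asymptotic for $D_{R,d}$ obtained via Poisson summation, and then reduce the bound to an explicit oscillatory radial integral whose average over the oscillation is bounded below uniformly in $p$.

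First, Poisson summation over $2\pi\mathbb{Z}^d$ expresses $D_{R,d}(x)$ as a sum of lattice translates of the Fourier transform of the indicator of the Euclidean ball of radius $R$, which is given by the classical Bessel identity $(2\pi)^{d/2}(R/|y|)^{d/2}J_{d/2}(R|y|)$. On a fixed spherical annulus $\Omega=\{x : a\leq |x|\leq b\}$ inside the fundamental domain, the translate at the origin dominates, and the asymptotic $J_{d/2}(z)=\sqrt{2/(\pi z)}\cos(z-(d+1)\pi/4)+O(z^{-3/2})$ yields
\[
D_{R,d}(x) = \kappa_d\, R^{(d-1)/2}\, |x|^{-(d+1)/2}\cos\bigl(R|x|-\phi_d\bigr) + O\bigl(R^{(d-3)/2}\bigr),
\]
with $\kappa_d=(2\pi)^{d/2}\sqrt{2/\pi}$ and $\phi_d=(d+1)\pi/4$. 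Integrating $|D_{R,d}|^p$ over $\Omega$ in polar coordinates, I would obtain a lower bound of the form
\[
\|D_{R,d}\|_p^p \geq c_1\, \kappa_d^p\, \omega_{d-1}\, R^{p(d-1)/2}\int_a^b |\cos(Rr-\phi_d)|^p\, r^{d-1-p(d+1)/2}\,dr,
\]
where $\omega_{d-1}$ is the surface area of the unit sphere in $\mathbb{R}^d$ and $c_1>0$ is absolute.

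Next, as $R(b-a)$ becomes large, the oscillatory factor averages to the positive constant
\[
C_p = \frac{1}{2\pi}\int_0^{2\pi}|\cos t|^p\,dt = \frac{\Gamma((p+1)/2)}{\sqrt{\pi}\,\Gamma(p/2+1)},
\]
which satisfies $C_p\geq 2/\pi$ for every $p\in(0,1]$. For $d\geq 2$ and $p\leq 1$ the exponent $d-1-p(d+1)/2$ is strictly greater than $-1$, so the remaining radial integral over $[a,b]$ is bounded below by a positive constant. Assembling these pieces yields $\|D_{R,d}\|_p^p\geq c\, R^{p(d-1)/2}$ for some $c\in(0,1)$, which is equivalent to the claim $\|D_{R,d}\|_p\geq c^{1/p}R^{(d-1)/2}$.

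The main obstacle is to ensure that the constant $c$ so obtained is genuinely absolute, i.e.\ uniform in both $d$ and $p$. Uniformity in $p$ is delivered cleanly by the bound $C_p\geq 2/\pi$, and the $c^{1/p}$ form of the bound is a direct consequence of averaging $|\cos|^p$ over a long oscillation. Uniformity in $d$, however, is more delicate: the prefactor $\kappa_d^p$, the surface area $\omega_{d-1}$, and the $d$-dependent radial integral each vary nontrivially with the dimension, so the annulus $[a,b]$ must be chosen with care (allowing $b$ to grow mildly with $d$) so that their product remains bounded away from $0$ as $d\to\infty$. The small-$R$ regime, where the asymptotic $R|x|\gg 1$ is unavailable on a fixed annulus, is handled separately using the identity $\|D_{R,d}\|_2^2=N(R,d)$ together with the pointwise bound $\|D_{R,d}\|_\infty=N(R,d)$ and H\"older's inequality.
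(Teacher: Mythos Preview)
Your approach is quite different from the paper's and substantially more laborious. The paper does not compute anything directly: it cites Ilyin's classical two-sided estimate $\|D_{R,d}\|_1 \asymp R^{(d-1)/2}$ together with the upper bound $\|D_{R,d}\|_r \leq C_r R^{(d-1)/2}$ valid for $1\le r<\tfrac{2d}{d+1}$ (both taken from Liflyand's survey), and then applies H\"older's inequality in the form $\|f\|_1\le \|f\|_p^{1-\theta}\|f\|_r^{\theta}$ with $1=(1-\theta)/p+\theta/r$ to extrapolate the $L^1$ lower bound down to $0<p<1$. The $c^{1/p}$ shape falls out automatically because $1/(1-\theta)$ is of order $1/p$. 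Your route, by contrast, is essentially a from-scratch reproof of Ilyin's lower bound, extended to $p<1$.

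As written, your argument has two genuine gaps. First, the Poisson summation step is not justified: the lattice translates of $\widehat{\chi_{B_R}}$ are controlled only by $|2\pi m-x|^{-(d+1)/2}$, and $\sum_{m\neq 0}|m|^{-(d+1)/2}$ diverges for every $d\ge 2$, so the assertion that ``the translate at the origin dominates'' on a fixed annulus needs a smoothing or regularization argument that you have not supplied. Second, the $d$-uniformity cannot be rescued in the way you propose. After normalizing the measure on $\mathbb{T}^d$, the prefactor carries a factor comparable to $\omega_{d-1}/(2\pi)^d\sim 1/\Gamma(d/2)$, and to offset this via the radial integral one would need $b\gtrsim d^{1/(2-p)}$, which for large $d$ pushes the annulus outside the fundamental domain $[-\pi,\pi]^d$; allowing $b$ to ``grow mildly with $d$'' is therefore not an option. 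The paper's H\"older argument avoids both difficulties by leaning on the cited $L^1$ and $L^r$ bounds, where the delicate analysis has already been carried out.
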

\begin{proof} 
The Lebesgue constant $\| D_{R,d}\|_1$ is well known to be bounded from and below by constants times $R^{\frac{d-1}{2}}$ by work of bound of Ilyin in \cite{Il68}. We refer to Liflyand's survey paper \cite[pp. 17--18]{Li06} which gives a clean and simple proof of this assertion. In fact, the proof in \cite{Li06} yields the upper bound $C_p R^{\frac{d-1}{2}}$ for $\| D_{R,d} \|_{p}$ in the range $1 \leq p<\frac{2d}{d+1}$. This means that we may employ H\"{o}lder's inequality to obtain the desired lower bound for $0<p<1$.
\end{proof}

\begin{proof}[Proof of Theorem~\ref{thm:qto2}]
	We replace \cite[Thm.~2.4]{KQSS22} by Lemma~\ref{lem:smallp} and find, following the proof of \cite[Thm.~2.1]{KQSS22} word for word, that for any fixed $0<p<1$ and $1<q<2$, there exist a positive integer $d$ and a function $h$ is $L^q(\mathbb T^d)$ such that $\| P_+ h \|_p>\| h\|_q$. The conclusion now follows since $p$ can be chosen as small as we please.
\end{proof}

It would be interesting to know if Lemma~\ref{lem:smallp} could be improved. Would it for instance be possible to compute the precise asymptotic behavior of $\| D_{R,d} \|_{0}$? If we were in the favorable situation that this quantity is bounded below by a constant times $R^{\frac{d-o(1)}{2}}$ when $d\to \infty$, then we would be able to prove that $\mathfrak{p}_d(q)=-1$ for all $1<q<2$ when $d$ is sufficiently large. This would have the following consequence: If $q\mapsto \mathfrak{p}_d(q)$ is strictly increasing on $[\mathfrak{q}(d), \infty)$, then the second part of the weak critical  exponent conjecture would hold.

\bibliography{cerp}
\end{document}